\newcommand{\FF}{\mathbb F}
\newcommand{\Z}{\mathbb Z}
\newcommand{\Q}{\mathbb Q}
\newcommand{\R}{\mathbb R}
\newcommand{\C}{\mathbb C}
\newcommand{\im}{\operatorname{im}}
\newcommand{\Hom}{\operatorname{Hom}}
\newcommand{\rank}{\operatorname{rank}}
\newcommand{\Ker}{\operatorname{Ker}}
\newcommand{\inte}{\operatorname{int}}
\newcommand{\zd}{\Z_d}
\newcommand{\zp}{\Z_p}
\newcommand{\rpone}{\R\textup{P}^1}
\newcommand{\rptwo}{\R\textup{P}^2}
\newcommand{\cptwo}{\C\textup{P}^2}
\newcommand{\cpone}{\C\textup{P}^1}
\newcommand{\cptwobar}{\overline{\C\textup{P}}\,\!^2}
\def\conn{\mathbin{\#}}
\newcommand{\xtilde}{\widetilde{X}}
\newcommand{\vtilde}{\widetilde{V}}
\newcommand{\ftilde}{\widetilde{F}}
\newcommand{\A}{\mathcal A}
\newcommand{\B}{\,\mathcal B}
\newcommand{\F}{\mathcal F}
\renewcommand{\P}{\mathcal P}
\newcommand{\V}{\mathcal V}
\renewcommand{\phi}{\varphi}
\newcommand{\sign}{\operatorname{sign}}
\newcommand{\spinc}{\ifmmode{\operatorname{Spin}^c}\else{$\operatorname{spin}^c$\ }\fi}
\newcommand{\cpl}{$\C$-pseudoline\xspace}
\newcommand{\rpl}{$\R$-pseudoline\xspace}
\newcommand{\lc}{\mathcal L} 
\newcommand{\Lc}{\mathcal L}
\DeclareMathOperator*{\Bigcdot}{\scalerel*{\cdot}{\bigodot}}
\newtheorem{theorem}{Theorem}[section]
\newtheorem{lemma}[theorem]{Lemma}
\newtheorem{proposition}[theorem]{Proposition}
\newtheorem{corollary}[theorem]{Corollary}
\theoremstyle{definition}
\newtheorem{definition}[theorem]{Definition}
\newtheorem{remark}[theorem]{Remark}
\newtheorem*{T:non-filling}{Theorem \ref{T:non-filling}}
\def\acknowledgementname{Acknowledgements.}
\newenvironment{acknowledgement}
   {
    \vspace*{\baselineskip}
    \noindent{\small\textbf{\acknowledgementname}}
    \unskip\noindent}{}
\title{Topological realizations of line arrangements}
\author[Daniel Ruberman]{Daniel Ruberman${}^1$}
\address{Department of Mathematics, MS 050\newline\indent Brandeis
University \newline\indent Waltham, MA 02454}
\email{ruberman@brandeis.edu}
\author[Laura Starkston]{Laura Starkston${}^2$}
\address{Department of Mathematics, \newline\indent  
Stanford University \newline\indent Stanford, CA 94305}
\email{lstarkst@stanford.edu}
\thanks{\noindent 
${}^1$\today. Partially supported by NSF Grant \#1506328 and NSF FRG Grant \#1065827
\noindent 
${}^2 $Partially supported by an NSF Postdoctoral Research Fellowship, Grant \#1501728
}
\begin{document}
\begin{abstract}
A venerable problem in combinatorics and geometry asks whether a given incidence relation may be realized by a configuration of points and lines. The classic version of this would ask for lines in a projective plane over a field. An important variation allows for {\em pseudolines}: embedded circles (isotopic to $\rpone$) in the real projective plane. In this paper we investigate whether a configuration is realized by a collection of $2$-spheres embedded, in symplectic, smooth, and topological categories, in the complex projective plane.  We find obstructions to the existence of topologically locally flat spheres realizing a configuration, and show for instance that the combinatorial configuration corresponding to the projective plane over any finite field is not realized. Such obstructions are used to show that a particular contact structure on certain graph manifolds is not (strongly) symplectically fillable. We also show that a configuration of real pseudolines can be complexified to give a configuration of smooth, indeed symplectically embedded, $2$-spheres.
\end{abstract}
\maketitle

\vspace{-.2in}

\section{Introduction}
A (complex projective) line arrangement is a collection of distinct projective lines in $\cptwo$. Any two lines intersect at a single point, but there may be multiple points, where more than two lines meet. Abstracting this property leads to the notion of \emph{combinatorial arrangements}: incidence relations which have a unique point on each pair of lines. A combinatorial arrangement is complex geometrically realizable if there is a configuration of complex lines in $\cptwo$ with the specified incidences. Moreover one can study the moduli space of geometric realizations, which is an algebraic variety. While line arrangements are simple objects to define, these moduli spaces can be very complicated. If one generalizes from line arrangements to hyperplane arrangements, such moduli spaces can realize any algebraic variety by Mnev's universality theorem \cite{mnev:universality}. Real and complex line arrangements have been studied for over a century, though many questions remain unanswered.

In this paper, we consider other types of topological and geometric realizations of combinatorial line arrangements in $\cptwo$. From a topological viewpoint, a complex projective line arrangement is a configuration of transversally embedded $2$-spheres, with a given pattern of intersections and such that each sphere is homologous to $\cpone$. We focus on the cases when such spheres are symplectic, smooth, or topologically locally flat. These categories are much less rigid than than the algebro-geometric one, so the moduli spaces of such realizations are no longer finite dimensional varieties; locally there is an infinite dimensional space of perturbations of a sphere in these categories. Despite the significant local differences, one can ask if there are topological differences between the moduli space of realizations in different categories for a fixed combinatorial arrangement. The most basic topological property is whether the moduli space is empty. In principle, a combinatorial arrangement may be realized in one category (say, smoothly) but not in another one (say, as algebro-geometrically).

A strong motivation for this course of study is to understand the symplectic case. Gromov's theory of pseudoholomorphic curves has allowed symplectic geometers to adapt many techniques from complex geometry. In particular, symplectic surfaces in $\cptwo$ can often be treated as complex curves, and Gromov used this idea to show that smooth symplectic surfaces in homological degrees $1$ and $2$ are all symplectically isotopic to complex algebraic curves \cite{gromov}. Extending this result to other symplectic surfaces (either smooth or with prescribed singularities) in $\cptwo$ is known as the \emph{symplectic isotopy problem}. The symplectic isotopy problem has been solved for some rather low degree cases by Sikorav, Shevchishin, and Siebert and Tian \cite{shevchishin,sikorav,sieberttian,sieberttian:symplecticisotopy}. For smooth surfaces these results cover up to degree $17$, and there are also limited results for low degree curves with certain prescribed singularities. Symplectic line arrangements provide a class of singular curves on which this problem can be tested. The second author showed in \cite{starkston:comparing} that for certain simple combinatorial arrangements, the moduli space of symplectic line arrangements is path connected, non-empty, and contains complex solutions. In this paper we study symplectic realizability of more complicated combinatorial arrangements, particularly those which cannot be realized algebro-geometrically.

In fact there are a few examples of differences between the algebraic category and the more flexible symplectic, smooth, or topological categories, coming from theorems in complex projective geometry. For example, Pappus' theorem states that in a configuration of lines intersecting as in figure \ref{fig:Pappus}, a line through any two of the points $P$, $Q$, and $R$ necessarily passes through the third. Thus an arrangement which contains a line passing through $P$ and $Q$ but not $R$ is not algebro-geometrically realizable, but it can be obtained from the algebro-geometric arrangement by slightly perturbing the line near $R$ (which is allowable in the topological, smooth, and symplectic categories). Such examples show that there are some differences between realizability in some of the categories. 

For each combinatorial arrangement the complex realization space has an expected complex dimension which increases by two for each line and decreases for each multiple point where more than two lines intersect. We think of combinatorial arrangements with smaller expected dimension as more degenerate configurations. In the Pappus example, the combinatorial arrangement which is symplectically realizable is less degenerate than the one which is complex algebraically realizable. One might hope for the opposite phenomenon: that working in the more flexible topological, smooth, and symplectic categories would allow one to realize combinatorial arrangements which are too degenerate to be algebro-geometrically realized (where the realization space is empty and has negative expected dimension). A priori, one might even expect that in the topological category any combinatorial arrangement can be realized. We show that in fact this is not the case.
\begin{figure}
	\centering
	\includegraphics[scale=.6]{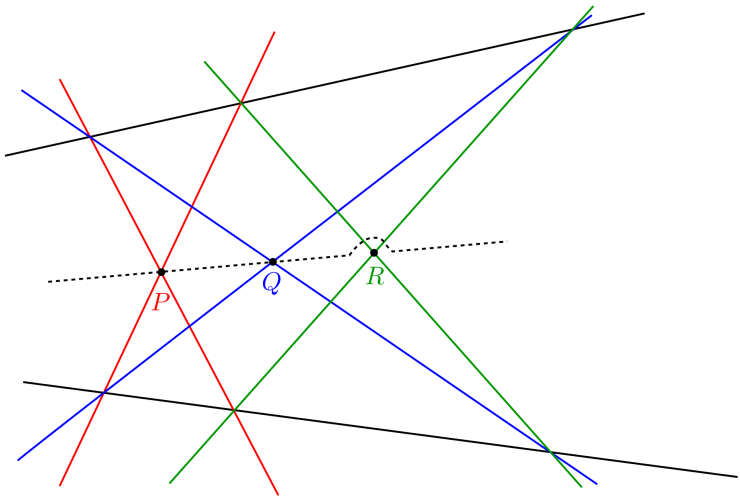}
	\caption{Pappus' theorem. A configuration of lines including the dotted one which passes through $P$ and $Q$ but not $R$ can be realized symplectically but not complex algebraically.}
	\label{fig:Pappus}
\end{figure}

In this paper, we give {\em topological} obstructions to realizing a combinatorial arrangement by a {\em topological} configuration. We show that a family of combinatorial arrangements, given by the projective planes over finite fields of order $p^d$, which is not realized by a complex line arrangement,  is not even realized topologically. It has been known since the 1800s that these combinatorial arrangements are not complex algebro-geometrically realizable; such a realization would require a solution to a system of equations that has solutions only when $p=0$.
\begin{theorem}\label{T:projplane}
Let $p$ be a prime and $d=p^q$ for some $q\geq 1$. There is no topologically flat embedding of transverse spheres of self-intersection $1$ in $\cptwo$ whose intersections give the incidence matrix of the projective plane over $\FF_{d}$.
\end{theorem}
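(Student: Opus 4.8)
Label the spheres $S_{1},\dots,S_{n}$ with $n=d^{2}+d+1$; any two meet in a single point, and the multiple points reproduce the $n$ points of the plane, each of multiplicity $d+1$. Since $S_{i}\cdot S_{j}=1$ for $i\neq j$ while $H_{2}(\cptwo)=\Z\langle h\rangle$, $h=[\cpone]$, $h^{2}=1$, writing $[S_{i}]=\varepsilon_{i}h$ gives $\varepsilon_{i}\varepsilon_{j}=1$, so after reversing orientations every $[S_{i}]=h$. Let $A$ be the $n\times n$ incidence matrix (symmetric, after fixing a polarity of $PG(2,\FF_{d})$); the combinatorics ``two lines meet once'' and ``a line has $d+1$ points'' are the identities $AA^{T}=A^{T}A=d\,\Id+J$ with $J$ all ones, so $\det A=\pm(d+1)\,d^{\,d(d+1)/2}$. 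The single arithmetic input of the proof is that, \emph{because $p\mid d$}, $A$ is singular mod $p$: $\bar A\bar A^{T}=J$ has rank $1$, hence $\rk_{\FF_{p}}A<n$ (sharply $\rk_{\FF_{p}}A=\binom{p+1}{2}^{q}+1$ by Hamada's theorem).

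The plan is to convert a hypothetical configuration into a closed topological $4$-manifold whose Euler characteristic and signature cannot coexist. First blow up $\cptwo$ at the $n$ multiple points --- a purely topological operation here, since local flatness makes the configuration locally standard near each point --- obtaining $\widetilde X=\cptwo\conn n\,\cptwobar$ in which the proper transforms $\widetilde S_{j}=h-\sum_{i\in j}E_{i}$ are \emph{pairwise disjoint} locally flat $2$-spheres with $\widetilde S_{j}^{\,2}=-d$. A Mayer--Vietoris argument (valid in TOP, since the $\widetilde S_{j}$ have honest disk-bundle neighborhoods) identifies $H_{1}$ of the complement $V=\widetilde X\setminus\bigcup_{j}\nu(\widetilde S_{j})$ with the finite $p$-group $\Z^{n}/(A\Z^{n}+\Z\,\mathbf 1)$ of order $d^{\,d(d+1)/2}$; as $\bar A\,\mathbf 1=\mathbf 1$ over $\FF_{p}$, its $\FF_{p}$-reduction is $\coker\bar A$, of dimension $k:=n-\rk_{\FF_{p}}A>0$, and no meridian vanishes in it (otherwise transitivity of $\mathrm{PGL}_{3}(\FF_{d})$ on lines would force all to vanish and $\rk_{\FF_{p}}A=n$). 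Form the $(\Z/p)^{k}$-cover $\widehat V\to\widetilde X$ branched along $\bigcup_{j}\widetilde S_{j}$ determined by $H_{1}(V)\to H_{1}(V;\FF_{p})$: it is a \emph{closed topological $4$-manifold}, since the branch locus is a disjoint union of locally flat spheres, each meridian wraps with order exactly $p$, and the local model along $\widetilde S_{j}$ is the standard cyclic branched cover of the disk bundle over its zero-section. Multiplicativity of $\chi$ then gives $\chi(\widehat V)=p^{\,k-1}\!\left(3p+n(2-p)\right)$, and the topological $G$-signature theorem expresses $\sigma(\widehat V)$ as $p^{k}\sigma(\widetilde X)$ plus branch contributions that are explicit Dedekind-sum terms in $p$, $d$ and the (combinatorially determined) windings of the $\widetilde S_{j}$. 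Comparing with $|\sigma(\widehat V)|\le b_{2}(\widehat V)=\chi(\widehat V)-2+2\,b_{1}(\widehat V)$ and, when $p=2$, with the Kirby--Siebenmann/Rokhlin congruence for $\sigma(\widehat V)$, yields a contradiction: for $p\ge 3$ one already has $\chi(\widehat V)<0$, impossible once $b_{1}(\widehat V)=0$.

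The principal obstacle is the bookkeeping that upgrades ``explicit'' to ``impossible.'' One must (i) run the $G$-signature computation, identifying for each $\widetilde S_{j}$ the order-$p$ image of its meridian in the deck group, evaluating the associated signature defect of the lens space $\partial\nu(\widetilde S_{j})$, and summing these over $j$ via the incidence structure of $A$; and (ii) control $b_{1}(\widehat V)$, equivalently show the twisted homology $H_{1}(V;\Q[(\Z/p)^{k}])$ is $\Q$-torsion --- e.g.\ by proving $\pi_{1}(V)$ is finite --- so that $\chi$ and $\sigma$ genuinely determine $b_{2}(\widehat V)$. Both are exactly the places where projective planes over $\FF_{p^{q}}$ diverge from configurations that \emph{are} topologically realizable: the rank deficit $\rk_{\FF_{p}}A<n$ simultaneously produces the branched cover ($k>0$) and makes its invariants inconsistent. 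I expect step (ii), the study of the fundamental group and twisted Betti numbers of $V$, to be the harder one.
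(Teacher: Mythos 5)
Your setup (blow up at all $n=d^2+d+1$ points, pass to a branched cover, play Euler characteristic against signature) is in the right family of ideas, but the decisive step is missing and, as architected, cannot be supplied. You branch over \emph{all} $n$ proper transforms using the full $(\Z/p)^k$-cover, and your contradiction for $p\ge 3$ is ``$\chi(\widehat V)<0$, impossible once $b_1(\widehat V)=0$.'' But a closed oriented $4$-manifold with $b_1=0$ has $\chi=2+b_2>0$, so $\chi(\widehat V)<0$ simply \emph{forces} $b_1(\widehat V)>0$; there is no contradiction until you independently bound $b_1$, and for this cover no such bound is available. The standard transfer/Gilmer-type estimate controls $b_1$ of a cyclic branched cover by $\dim_{\Z_p}H_1(V;\Z_p)-1$, which vanishes only when the branch locus supports a mod $p$ relation that is unique up to scalar; your branch locus supports a $k$-dimensional space of relations with $k$ large (your own Hamada computation), so the eigenspace Betti numbers can be as large as $k-1$ and, by your $\chi$ computation, some of them genuinely are nonzero. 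The signature route fares no better: $|\sigma|\le b_2=\chi-2+2b_1$ needs an \emph{upper} bound on $b_1$ before it says anything, and your $p=2$ appeal to a Rokhlin/Kirby--Siebenmann congruence would additionally require the cover to be spin, which you have not established.

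The paper's proof resolves exactly this issue by choosing the branch locus much more carefully: it branches a single $\Z/p$-cover over the $2p$ lines of the strict sub-arrangement $\B_{1,1}^p$ (all lines through two chosen points except the one joining them). A short combinatorial argument (Proposition \ref{prop:Babprels}) shows the mod $p$ relation carried by these $2p$ proper transforms is unique up to scalar, so $H^1(X-F;\Z_p)$ has rank one and every nontrivial eigenspace of $H_1$ of the cover vanishes; this is what makes $\chi$ and the equivariant $G$-signature actually determine $b_2^{\pm}(r)$. The contradiction then comes not from $\chi<0$ but from counting: the $p^2-p+1$ lines \emph{outside} the branch locus each lift to $p$ disjoint negative spheres, and together with the $2p$ lifted branch spheres they span a nondegenerate subspace of rank $p(p^2-p+3)$, exceeding $b_2(\xtilde)=p(p^2-3p+8)-2$ for $p>2$; the case $p=2$ is settled by comparing against $b_2^-=9$ computed from the Casson--Gordon signature formula. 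In short, the missing idea is to pick a branch set realizing a \emph{minimal} relation with no independent sub-relations (so $b_1$ of the cover vanishes) and to exploit the lifts of the non-branch lines, rather than to branch over everything and hope the Euler characteristic alone is inconsistent.
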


There has been considerable study of configurations of points and lines in the real (projective) plane. Much of our knowledge and terminology on configurations here comes from the book of Gr{\"u}nbaum \cite{grunbaum:configurations}, which contains many fascinating results about real geometric and topological realizations. An \emph{$(n_k)$-configuration} is a line arrangement consisting of $n$ lines with $n$ distinguished points of multiplicity $k$ such that each line contains exactly $k$ distinguished points. The finite projective plane arrangements obstructed by Theorem~\ref{T:projplane} are $(n_k)$-configurations for $n=d^2+d+1$ and $k=d+1$. In this case, (and generally whenever $n=k(k-1)+1$) all of the points of the configuration are distinguished, so in some sense, these are the most degenerate types of configurations. Increasing $n$ while holding $k$ fixed yields configurations which gradually become easier to realize. It is known that when $n=k^k$, there exists a realization of an $(n_k)$-configuration by straight lines in the real plane. It is generally difficult to understand the realizability of the configurations for $k(k-1)+1\leq n <k^k$ in the various categories in the real and complex projective planes, though some results are known in the real case for $k=3,4$.

For example, the unique combinatorial $(8_3)$-configuration cannot be realized in the real projective plane (see~\cite[\S 2.1]{grunbaum:configurations} for a proof), but it can be realized geometrically in the complex projective plane \cite{levi:configs}. The configuration $(8_3)$ is the ``second most degenerate'' $(n_3)$ configuration to consider after the projective plane of order $2$ (the Fano plane) and already it becomes realizable in $\cptwo$. However if we consider $(n_4)$ configurations, we are able to obstruct $\C$-realizability further. The projective plane of order $3$, obstructed by Theorem~\ref{T:projplane}, is the unique most degenerate $(13_4)$ configuration. There is also a unique combinatorial $(14_4)$-configuration, and in this case we are able to obstruct it topologically.
\begin{theorem}\label{T:144config}
	There is no topological $\C$-realization of a $(14_4)$ configuration.
\end{theorem}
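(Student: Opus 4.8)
The plan is to run the argument behind Theorem~\ref{T:projplane} with the $14\times14$ incidence matrix of the $(14_4)$ configuration in place of that of $PG(2,\FF_3)$. First I would record the combinatorics of the (essentially unique) $(14_4)$ configuration $\mathcal C$: it has $14$ lines and $14$ quadruple points, and since $\binom{14}{2}=91$ while the quadruple points account for $14\binom{4}{2}=84$ incidences, the remaining seven incidences are ordinary double points. Hence each line meets $12$ of the others at its four quadruple points and exactly one more at a double point, so the seven double points induce a perfect matching of the $14$ lines into seven pairs. From a hypothetical topological $\C$-realization by locally flat, self-intersection-$1$ spheres $S_1,\dots,S_{14}$ in $\cptwo$, one would blow up the multiple points so that the $S_i$ become disjoint locally flat spheres in a blow-up of $\cptwo$ and build the auxiliary closed $4$-manifold (a branched cover, say) used in the proof of Theorem~\ref{T:projplane}; the obstruction there should then apply, the only new work being to verify the relevant numerical invariant directly for $\mathcal C$'s incidence matrix instead of quoting the classical non-realizability of finite projective planes.

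A cleaner route, worth trying first, is reduction to a smaller obstructed configuration --- the Fano plane $PG(2,\FF_2)$ (a $(7_3)$ configuration), which Theorem~\ref{T:projplane} rules out with $p=2$, $q=1$. Concretely, one would hope to select one line from each of the seven matched pairs of $\mathcal C$ so that the seven selected lines meet one another exactly in seven triple points with no other coincidence, i.e.\ in the Fano pattern; note that two selected lines, coming from distinct pairs, never share a double point of $\mathcal C$, so all of their $\binom{7}{2}=21$ pairwise intersections occur at quadruple points of $\mathcal C$. If such a selection exists, restricting the given topological $\C$-realization to the seven corresponding spheres yields a topological $\C$-realization of $PG(2,\FF_2)$ --- those spheres remain locally flat, homologous to $\cpone$, of self-intersection $1$, and meet with exactly the Fano incidences --- contradicting Theorem~\ref{T:projplane}.

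The main obstacle, for either approach, is combinatorial and requires knowing the unique $(14_4)$ configuration explicitly (e.g.\ from Gr\"unbaum's tables): one must either exhibit a selection of seven lines as above that genuinely realizes the Fano pattern --- in particular no quadruple point of $\mathcal C$ may then carry four of the selected lines --- or, if no such selection exists, redo the blow-up and branched-cover bookkeeping of Theorem~\ref{T:projplane} with the seven extra double points (absent in $PG(2,\FF_3)$) and check the numerical obstruction by hand. One should also be careful that ``realization of $\mathcal C$'' is understood to force each pair of spheres to meet in a single point with exactly the prescribed coincidences, so that any sub-configuration inherits precisely the projective-plane incidences and no spurious intersections.
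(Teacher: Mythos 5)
Your proposal sketches two possible strategies but carries out neither, and the essential content of the paper's argument is missing from both. The paper's proof hinges on a specific choice that you never make: it locates a \emph{strict} sub-arrangement $\B_{2,2}^2$ inside the $(14_4)$ configuration (four lines through one quadruple point, four through the unique quadruple point off those lines, with all other mutual intersections double), blows up \emph{only} at the $18$ intersection points of that sub-arrangement, and takes the double branched cover over the proper transforms of its eight lines. The six remaining lines then have proper transforms of square $-3$ pairing up via the three unblown-up double points, their lifts produce a rank-$6$ negative definite subspace of the $(-1)$-eigenspace, and Proposition \ref{prop:bettiBpab} gives $b_2^-(1)=1+18-8-8=3$, a contradiction. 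Your first route --- blow up the multiple points so the spheres become disjoint and ``build the auxiliary branched cover used in the proof of Theorem \ref{T:projplane}'' --- does not specify the branch locus or the mod $2$ relation, and the paper explicitly warns that the choice of blow-up points is essential here: with all intersection points blown up, the six leftover lines give only six orthogonal negative classes in the $(-1)$-eigenspace against a computed $b_2^-(1)=6$, so the count you would ``verify directly'' yields no contradiction without further input (e.g.\ the exceptional spheres over the extra points), which you do not supply. Also note that the relation $F_1+F_2-F_3-F_4$ used for $\B_{1,1}^2$ in Theorem \ref{T:projplane} is not even a mod $2$ relation after blowing up all $21$ points of the $(14_4)$, so ``the branched cover used there'' does not exist as stated.

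Your second route --- finding seven of the fourteen lines meeting pairwise only at seven triple points, i.e.\ a Fano sub-arrangement, and invoking Theorem \ref{T:projplane} --- would be logically sound if such a selection existed, since non-realizability passes from sub-arrangements to arrangements. But the existence of such a selection in the unique $(14_4)$ configuration is exactly the combinatorial fact you flag as ``the main obstacle'' and leave unverified; without it the argument does not get off the ground, and if it fails you fall back to route one, which as written is not a proof. In short, the correct combinatorial setup of the $(14_4)$ configuration (fourteen quadruple points, seven double points forming a perfect matching) is in place, but the actual obstruction --- which sub-arrangement to branch over, which points to blow up, and the eigenspace signature computation --- is absent.
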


In addition to obstructing topological (and thus smooth and symplectic) realizations, we also give a method of constructing symplectic line arrangements from a configuration of topologically embedded lines in $\rptwo$ (known as {\em pseudolines} in the literature~\cite{grunbaum:configurations}).  To be clear about whether we are discussing real or complex realizability, we will refer to a topologically embedded configuration in $\rptwo$ as an \emph{$\R$-pseudoline arrangement} or \emph{topological $\R$-realization}, and the corresponding object in $\cptwo$ will be called a \emph{$\C$-pseudoline arrangement} or a \emph{topological/smooth/symplectic/geometric $\C$-realization}. 

There is a natural embedding of $\rptwo$ in $\cptwo$ as the points with real coordinates.  Any arrangement of real lines in $\rptwo$ is given by a collection of real linear equations; the set of complex solutions to these equations is an arrangement of complex lines with the same incidences.  We show an analogous fact for pseudolines.
\begin{theorem}\label{T:real-complex}
Let $\lc_\R$ be an $\R$-pseudoline configuration in $\rptwo$. Then there is a configuration $\lc_\C$ of smoothly embedded $2$-spheres in $\cptwo$ whose intersection with $\rptwo$ is $\lc_\R$, such that the intersections amongst the spheres in $\lc_\C$ are precisely those amongst the pseudolines in $\lc_\R$. The configuration $\lc_\C$ is invariant under complex conjugation. 
\end{theorem}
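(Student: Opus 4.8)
The plan is to build each complexified pseudoline $S_i\in\lc_\C$ as the union of an annular ``collar'' of $L_i$ lying in a tubular neighborhood of $\rptwo$, together with two embedded disks, interchanged by complex conjugation, that lie in the complement of that neighborhood. To set the stage, recall that $\rptwo\subset\cptwo$ is Lagrangian for the Fubini--Study form and is the fixed locus of the antisymplectic involution given by complex conjugation; by the Weinstein neighborhood theorem a tubular neighborhood $N$ of $\rptwo$ is identified with a neighborhood of the zero section in $(D^*\rptwo,\omega_{\mathrm{can}})$ so that conjugation becomes $(q,p)\mapsto(q,-p)$. I would choose the identification so that near each multiple point of $\lc_\R$ there is an affine chart in which $\rptwo\subset\cptwo$ and $N$ become the standard $\R^2\subset\C^2$ and its flat normal neighborhood; and, after an isotopy of $\lc_\R$, I may assume each $L_i$ is a straight segment near every multiple point and that two pseudolines meet only at multiple points.

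To construct the collar $A_i$: over a point of $L_i$ that is not a multiple point, take $A_i$ locally to be the set of covectors $(q,p)$ with $q\in L_i$ and $p$ annihilating the normal line of $L_i$ in $\rptwo$, i.e.\ $p\in(T_qL_i)^{\vee}$; near a multiple point $p_0=L_{i_1}\cap\cdots\cap L_{i_k}$ replace the $k$ concurrent segments by the $k$ complex lines through $p_0$ with the same real slopes. A short computation in the model $\cpone=\{z_1=0\}$ shows that near $\rptwo$ the latter restricts to the former, so the pieces fit into an embedded surface $A_i\subset N$. As $L_i\cong S^1$ and $(TL_i)^{\vee}$ is a trivial line bundle over it, $A_i$ is a closed annulus with core $L_i$; it is conjugation-invariant, $A_i\cap\rptwo=L_i$, its boundary $C_i^{+}\sqcup C_i^{-}\subset\partial N$ is interchanged by conjugation, and for $i\neq j$ one has $A_i\cap A_j=\{p_{ij}\}$ where $p_{ij}=L_i\cap L_j$, a single positive transverse intersection (there the $A$'s are complex lines).

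To cap off, set $W=\cptwo\setminus\operatorname{int}N$. The map $[x+iy]\mapsto[x\wedge y]$ shows $\cptwo\setminus\rptwo$, and hence $W$, is simply connected (indeed homotopy equivalent to $S^2$), with $\partial W\cong ST^*\rptwo\cong\R\textup{P}^3$. Since every embedded $\rpone\subset\rptwo$ is isotopic to a projective line, $C_i^{+}$ is isotopic in $\partial W$ to the boundary circle of the collar of a genuine complex line $\Lambda_0$, and $\Lambda_0$ meets $W$ in two disks; transporting one of them along the isotopy shows $C_i^{+}$ bounds an embedded disk $D_i^{+}\subset W$. Taking $D_i^{-}=\overline{D_i^{+}}$ and $S_i=A_i\cup D_i^{+}\cup D_i^{-}$, smoothed along the two gluing circles, yields conjugation-invariant smoothly embedded $2$-spheres with $S_i\cap\rptwo=L_i$ and $S_i\cap S_j=\{p_{ij}\}$ for $i\neq j$; the relation $S_i\cdot S_j=+1$ forces $[S_i]=[\cpone]$. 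The case $n=1$ is handled directly: every embedded $\rpone$ is isotopic to a line, the isotopy extends conjugation-equivariantly over $\cptwo$ (as $\operatorname{MCG}(\rptwo)$ is trivial), and $S_1$ is the image of $\cpone$ under the resulting diffeomorphism.

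The step I expect to be the main obstacle is arranging the caps $D_i^{\pm}$ to be pairwise disjoint and disjoint from the collars $A_j$, $j\neq i$ --- the isotopies producing the individual caps need not be mutually compatible. The homological bookkeeping is favorable: $[S_i]\cdot[S_j]=1$ is already realized by $p_{ij}$, so any excess intersections among the caps cancel in pairs. To turn this into geometric disjointness my plan is to first normalize $\lc_\R$ so that outside a large disk $B\subset\rptwo$ it agrees with a genuine line arrangement (a standard fact about pseudoline arrangements); then outside a ball $\widehat B\cong D^4$ in $\cptwo$ the collars and caps already coincide with honest complex lines, the entire capping problem is confined to $\widehat B$, and there the surplus intersections are removed by finger and Whitney moves, using simple-connectivity and the fact that inside $\widehat B$ everything is a small perturbation of a linear configuration.
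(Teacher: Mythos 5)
Your setup coincides with the paper's up to a point: both use the Lagrangian $\rptwo$ with its (co)tangent disk-bundle neighborhood $N$, whose complement is a neighborhood $W$ of a real-point-free conic, and both take as the core of each $S_i$ the annulus of unit-(co)tangent lifts of $L_i$; your observation that these annuli meet pairwise only at the points $p_{ij}$, transversally and positively, is exactly the paper's corresponding lemma. The divergence, and the gap, is in how the annuli are closed up. You freeze the curve $L_i$ and sweep out the constant conormal annulus, which leaves you with $2n$ boundary circles in $\partial W$ that must be capped by \emph{pairwise disjoint} embedded disks in $W$, disjoint also from the other collars. You correctly flag this as the main obstacle, but the proposed resolution does not hold up. First, the localization claim is not right: even if $\lc_\R$ is straightened outside a disk $B\subset\rptwo$, the circles $C_i^{\pm}$ differ from the circles of the linear model over all of $B$, and $B$ is precisely where the arrangement is combinatorially nontrivial; the caps are in no quantitative sense ``small perturbations of a linear configuration'' (non-stretchable pseudoline arrangements are exactly those that are far from linear ones). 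Second, and more seriously, removing the algebraically cancelling excess intersections by Whitney moves is not available in the \emph{smooth} category without producing embedded, correctly framed Whitney disks with interiors disjoint from all the surfaces --- this is the fundamental failure of the Whitney trick in dimension $4$, and simple connectivity of the ambient region does not supply such disks. Since the theorem asserts smooth (and, in Theorem~\ref{T:real-symplectic}, symplectic) spheres, this step cannot be waved through.

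The paper's proof avoids the capping problem entirely by making the annulus nonconstant in the radial direction: the fiber radius $r$ is used as the time parameter of a \emph{transverse regular homotopy} $\Gamma_i^r$ carrying the pseudoline arrangement to a genuine straight-line arrangement, so that near $\partial N$ each swept annulus already coincides with the complexification of a straight line and closes up with two honest normal disks to the conic. Disjointness of the resulting spheres away from $\rptwo$ is then automatic from transversality of the arrangement at every time $r$, with no Whitney moves needed. The substantive content you are missing is exactly the existence of such a homotopy (Proposition~\ref{prop:transverse}), which the paper proves by putting the arrangement in Goodman wiring-diagram form, giving an explicit transverse homotopy splitting and merging multiple points, and invoking the Matsumoto--Tits theorem on reduced words to reorder the double points by braid moves. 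Without this (or an equivalent device), your construction establishes disjoint conjugation-invariant annuli with the right intersections but does not produce embedded spheres. (A small additional slip: $\partial W$ is the unit cotangent bundle of $\rptwo$, which is the lens space $L(4,1)$, not $\R\textup{P}^3$; this does not affect the main issue.)
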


Theorem \ref{T:real-complex} shows that a topological realization in the real projective plane gives rise to a topological realization in the complex projective plane. The converse does not hold though; the $(8_3)$-configuration shows that obstructions to $\R$-pseudoline arrangements do not generally extend to obstruct $\C$-pseudoline arrangements. In addition to a purely topological complexification, we are able to build symplectic configurations from an \rpl\ arrangement.

\begin{theorem}\label{T:real-symplectic}
	Let $\lc_\R$ be an $\R$-pseudoline configuration in $\rptwo$. Then $\lc_\R$ is isotopic to a configuration with a complexification as in theorem \ref{T:real-complex} consisting of symplectic spheres.
\end{theorem}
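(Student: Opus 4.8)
The plan is to reexamine the complexification $\lc_\C$ produced by Theorem~\ref{T:real-complex} and to make every choice in its construction compatible with positivity of the Fubini--Study form $\omega$ on $\cptwo$, after first normalizing $\lc_\R$ by an isotopy. The point is that near $\rptwo$ the construction is already symplectic for free, so the real work is to cap it off symplectically in the complement.

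Complex conjugation is an anti-symplectic involution of $(\cptwo,\omega)$ with fixed locus $\rptwo$, so $\rptwo$ is Lagrangian. Fix a conjugation-equivariant Weinstein neighborhood $W$ of $\rptwo$, identified with a disk cotangent bundle $D^*\rptwo$ in which conjugation is $-1$ on the fibers and the standard complex structure carries base directions $T_x\rptwo$ to fiber directions. In the construction of Theorem~\ref{T:real-complex} the part of the complexified sphere $\ell_\C$ of a pseudoline $\ell$ lying in $W$ is a neighborhood of the zero section in the sub-cylinder of $T^*\rptwo|_\ell$ of covectors annihilating the normal bundle of $\ell$ in $\rptwo$; this cylinder is swept by the isotropic circle $\ell$ and the Lagrangian cotangent lines $T^*_x\ell$, on which $\omega$ pairs positively, so $\ell_\C$ is symplectic throughout $W$. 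Since two distinct pseudolines meet in a single point and $[\ell_\C]\cdot[\ell'_\C]=1$, every intersection among the spheres of $\lc_\C$ lies on $\rptwo\subset W$ and is there a positive transverse intersection of the pseudolines. After an isotopy of $\lc_\R$ supported near the multiple points, putting the pseudolines through each multiple point in straight position in a Darboux chart --- which does not change the combinatorics --- the cylinders over those segments become pieces of genuine concurrent complex lines. Hence $\lc_\C$ is symplectic on all of $W$, and all of its singular behavior is confined to $W$.

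It remains to build $\lc_\C$ outside $W$. For each pseudoline $\ell$ this is a pair of disks $D^\ell_\pm\subset\cptwo\setminus\inte W$, interchanged by conjugation, with $\partial D^\ell_\pm$ the two boundary circles of the cylinder over $\ell$ on $\partial W$, and --- since every intersection already lies in $W$ --- we need $D^\ell_+\cap D^\ell_-=\emptyset$ and $D^\ell_\pm\cap D^{\ell'}_\pm=\emptyset$ for $\ell\ne\ell'$. We want such disks embedded, symplectic, and spliced $C^1$-smoothly (hence symplectically) to the cylinders along $\partial W$. It is convenient to use the classical fact that $\cptwo\setminus\inte W$ is, conjugation-equivariantly, symplectomorphic to the disk bundle of Euler number $4$ over the empty conic $Q=\{x^2+y^2+z^2=0\}$: each $D^\ell_\pm$ meets $Q$ once, at a point $D^\ell_\pm\cap Q$, and these points are pairwise distinct over all $\ell$ and both signs (any coincidence would force an intersection of configuration spheres outside $W$, or a real point of $Q$), so choosing $D^\ell_\pm$ close to the fiber of this bundle over its point makes each disk symplectic and the whole family automatically disjoint. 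Alternatively, one builds a single $\omega$-tame almost complex structure on $\cptwo$, equal to its conjugate, making every cylinder, every $D^\ell_\pm$, and $Q$ holomorphic --- these surfaces form a configuration with only positive transverse intersections --- so that $\lc_\C$ becomes $J$-holomorphic.

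The hard part will be this last step. Near $\rptwo$ everything is rigid and positivity is automatic; away from $\rptwo$ the combinatorial complexity of the arrangement reappears in the identification of $\partial W=UT^*\rptwo$ with the boundary of the disk bundle over $Q$, and one must show that the boundary circles forced on us by the near-$\rptwo$ cylinders can be arranged --- by choosing the isotopy of $\lc_\R$ suitably, in addition to the straightening near the multiple points --- to be fiber circles of that bundle bounding a disjoint, embedded, conjugation-equivariant family of symplectic disks. Checking that the cylinders can be completed in this way, uniformly over all pseudolines and compatibly with complex conjugation, is where the real work lies.
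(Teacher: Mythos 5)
There is a genuine gap: your proposal stops exactly where the proof has to start working. Your observation about the region near $\rptwo$ is correct --- for a \emph{fixed} curve $\Gamma$, the ``tangential lift'' cylinder $\{(p,v): p\in\Gamma,\ v^*\in T_p\Gamma\}$ in $T^*\rptwo$ is symplectic (this is the $\partial q/\partial r\equiv 0$ case of the explicit formula in the paper, where $\omega(\psi_t,\psi_r)=\sqrt{1+(dq/dt)^2}>0$). But the entire content of the theorem is the step you defer to the end and label ``where the real work lies'': closing these cylinders up to disjoint embedded symplectic spheres. The boundary circles of your rigid cylinders on $\partial W$ are the lifts $L(\ell)$ of the actual curved pseudolines, which are \emph{not} fibers of the normal circle bundle of the conic $Q$; producing disjoint, embedded, conjugation-equivariant symplectic disks in $\cptwo\setminus\inte W$ with those prescribed boundaries is exactly equivalent to the interpolation problem you have pushed out of $W$, and you give no argument for it. (Your suggestion to take $D^\ell_\pm$ ``close to the fiber over its point'' presupposes the boundary circle is already near a fiber circle, which fails for a general pseudoline, and the ``points $D^\ell_\pm\cap Q$'' are not defined until the disks are.)

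The paper resolves this by doing the interpolation \emph{inside} the tubular neighborhood: Proposition~\ref{prop:transverse} produces a transverse regular homotopy $\Gamma^r$ from the pseudoline arrangement (in wiring-diagram form, via Goodman's theorem and the Matsumoto--Tits braid-move argument) to a genuine straight-line arrangement, and the annulus $A(\Gamma^t)$ is swept out by the lifts of $\Gamma^r$ at radius $r$, so that near $r=1$ it coincides with an honest complex line and caps off with complex normal fibers of $Q$. The price is that the $r$-dependence of $\Gamma^r$ introduces the terms $-\frac{dq}{dr}\frac{d^2q}{dt^2}+\cdots$ in $\omega(\psi_t,\psi_r)$, which are not automatically positive; the paper kills them by first isotoping $\lc_\R$ to a version of the wiring diagram stretched by $1/\varepsilon$ in the horizontal direction, which scales these terms by powers of $\varepsilon$. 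That stretching isotopy is precisely the ``isotopic to a configuration'' in the statement of the theorem, and it is the ingredient your proposal is missing: without some such normalization, neither the interpolating annuli nor any alternative completion of your cylinders is known to be symplectic.
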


Constructing \rpl\ arrangements is very concrete since they can be drawn in $\R^2$, and there are known differences between real straight line arrangements and \rpl\ arrangements. Sufficiently deep examples of combinatorial line arrangements with topological $\R$-realizations, but no algebro-geometric $\C$-realization could yield constructions of symplectic 4-manifolds which exhibit properties that complex surfaces cannot have through branched covering or surgery constructions. Optimistically, one might even hope that such examples could yield symplectic counterexamples to the Bogomolov-Miyaoka-Yau inequality.

Another symplectic application of this exploration yields results about symplectic fillings of certain contact $3$-manifolds. Many of the known symplectic filling classifications eventually reduce to a classification of symplectic curves in $\cptwo$ with prescribed singularities (see \cite{mcduff:rationalruled, lisca, ohtaono:symplecticfillings, ohtaono:simplesingularities, starkston:fillings, gollalisca:stein}). The line arrangement case is relevant to fillings of a large class of Seifert fibered spaces in \cite{starkston:fillings}, and here we give another application. To a combinatorial arrangement $\A$, we associate a canonical contact structure on a $3$-manifold $Y_\A$. Roughly speaking, $Y_\A$ is the boundary of a neighborhood of $2$-spheres intersecting according to $\A$, built through a plumbing type construction (see section \ref{S:non-fillable} for the precise definition). The manifolds $Y_\A$ are graph manifolds, but typically have large first Betti number. We show non-realizability implies non-fillability.
\begin{theorem}\label{T:non-filling}
	Suppose a combinatorial line arrangement $\A$ is not symplectically realizable in $\cptwo$. Then the canonical contact manifold $Y_\A$ is not strongly symplectically fillable.
\end{theorem}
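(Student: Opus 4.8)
My plan is to prove the contrapositive: if $(Y_\A,\xi_\A)$ admits a strong symplectic filling, then $\A$ is symplectically realizable in $\cptwo$. Recall from Section~\ref{S:non-fillable} that the construction exhibits $(Y_\A,\xi_\A)$ as the concave boundary of a symplectic manifold $(N_\A,\omega_0)$ which is a plumbing-type neighborhood of a symplectic configuration $\Sigma=S_1\cup\dots\cup S_m$ of $2$-spheres, each of self-intersection $+1$, meeting $\omega_0$-orthogonally at points prescribed by the incidences of $\A$. Given a strong (convex) filling $(W,\omega_W)$ of $(Y_\A,\xi_\A)$, the first step is the standard capping-off procedure: after rescaling $\omega_W$ and matching the convex contact collar of $W$ with the concave contact collar of $N_\A$, glue $W$ to $N_\A$ along $Y_\A$ to obtain a \emph{closed} symplectic $4$-manifold $(X,\omega)$ containing $\Sigma$ with all self-intersection and incidence data unchanged.

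The second step is to recognize $X$ as a blow-up of $\cptwo$ in which $\Sigma$ is a line arrangement, up to exceptional spheres disjoint from it. Since $S_1$ is a symplectically embedded sphere of square $+1$, its neighborhood is symplectomorphic to the standard disk-bundle model, whose concave boundary is $(S^3,\xi_{std})$; hence $X\setminus\nu(S_1)$ is a strong convex filling of $(S^3,\xi_{std})$. By the classification of fillings of the standard contact $3$-sphere (Gromov, McDuff; see \cite{gromov,mcduff:rationalruled}), $X\setminus\nu(S_1)$ is a symplectic blow-up of the ball, so $X\cong\cptwo\conn n\overline{\cptwo}$ with $[S_1]=H$ the line class and the exceptional classes $E_1,\dots,E_n$ represented inside $X\setminus\nu(S_1)$, in particular disjoint from $S_1$. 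For the remaining spheres, the incidence pattern of $\A$ forces $[S_i]\cdot H=[S_i]\cdot[S_1]=\#(S_i\cap S_1)=1$; writing $[S_i]=aH+\sum_j b_jE_j$ this gives $a=1$, and then $[S_i]^2=1-\sum_j b_j^2=1$ forces every $b_j=0$, so $[S_i]=H$ for all $i$.

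The third step is to blow down to $\cptwo$ without disturbing $\Sigma$. Choose an $\omega$-tame almost complex structure $J$ for which $\Sigma$ is $J$-holomorphic and which is generic away from $\Sigma$; then a maximal system of pairwise-orthogonal exceptional classes is represented by disjoint embedded $J$-holomorphic spheres $C_1,\dots,C_n$. Since $[C_k]\cdot[S_i]=E_k\cdot H=0$ and every local intersection of the $J$-holomorphic curves $C_k$ and $S_i$ is nonnegative, each $C_k$ is disjoint from $\Sigma$. Blowing down $C_1,\dots,C_n$ therefore leaves $\Sigma$ untouched and produces $(\cptwo,\omega_{FS})$ (up to scaling) containing $\Sigma$ as a configuration of symplectically embedded spheres, each homologous to $\cpone$, meeting transversally with exactly the incidences of $\A$ --- that is, a symplectic $\C$-realization of $\A$, contradicting the hypothesis.

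The main obstacle is the third step together with the half of the second step that supports it: one must guarantee that the symplectic blow-downs reducing $X$ to $\cptwo$ can be performed in the complement of the \emph{entire} configuration $\Sigma$. This relies on (i) using the filling of $(S^3,\xi_{std})$ to pin down $[S_1]$ as the actual line class $H$ --- without this one cannot conclude the homological identities $[S_i]=H$, since other $+1$ classes satisfy the same numerical constraints --- and (ii) arranging $J$ so that the exceptional spheres are simultaneously $J$-holomorphic and, by positivity of intersections, disjoint from $\Sigma$; the delicate point here is controlling possibly reducible representatives of exceptional classes and the genericity of $J$ within the space of almost complex structures making $\Sigma$ holomorphic. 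A secondary but routine point is the normalization of $\omega_W$ near $\partial W$ needed to carry out the gluing in the first step.
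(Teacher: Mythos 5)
Your proposal is correct and follows essentially the same route as the paper: cap off the concave neighborhood of the $+1$-sphere configuration with the hypothetical filling, invoke McDuff's theorem to identify the resulting closed symplectic manifold as a blow-up of $\cptwo$ with one configuration sphere in the class $H$, deduce $[S_i]=H$ for all $i$ from the same homological computation, and blow down $J$-holomorphic exceptional spheres that positivity of intersections forces to be disjoint from the configuration. The only cosmetic differences are that you glue the filling directly to $N_\A$ rather than to the plumbing $P_\A$ and then blowing down its $(-1)$-spheres, and that you phrase McDuff's theorem via fillings of $(S^3,\xi_{std})$ rather than via positive symplectic spheres; these are equivalent formulations of the same input.
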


\begin{acknowledgement}
We thank Daniel Bump, Pat Gilmer, Kiyoshi Igusa, Tye Lidman, Patrick Massot, Nathan Reading, Vivek Shende, Ivan Smith, and Alex Suciu for helpful discussions and correspondence on the material in this paper.
This project was initiated at the Workshop {\em Geometry and Topology of Symplectic 4-manifolds} at the University of Massachusetts, April 2015. 
\end{acknowledgement}

\section{Combinatorial line arrangements}
\subsection{Definitions}

\begin{definition}
	A \emph{combinatorial line arrangement} $\mathcal{A}$ consists of a set of lines $\mathcal{A}_{\mathcal{L}}=\{L_1,\cdots, L_\ell\}$, a set of points $\mathcal{A}_{\mathcal{P}}=\{P_1,\cdots , P_p\}$, and an incidence relation. The incidence relation is a pairing $\cdot: \mathcal{A}_{\mathcal{L}}\times \mathcal{A}_{\mathcal{P}} \to \{0,1\}$ such that every pair of lines intersects at a unique point, meaning for every pair $L_i\neq L_j$, there is a unique point $P_k$ such that $L_i\cdot P_k=L_j\cdot P_k=1$.
\end{definition}

We can encode the data of a combinatorial line arrangement in a matrix as follows.
\begin{definition}
	An \emph{incidence matrix} for a combinatorial line arrangement $\A$ is an $|\A_\Lc|\times|\A_\P|$ matrix whose entry in the $i^{th}$ row and $j^{th}$ column is $L_i\cdot P_j$.
\end{definition}

\begin{definition}
	Given a point $P\in \A_\P$, we say its \emph{multiplicity} is the number of lines containing $P$. If $P$ has multiplicity $2$ we say $P$ is a \emph{double point}. If $P$ has multiplicity $\geq 3$ we say $P$ is a \emph{multi-point}.
\end{definition}

\begin{definition}
	A \emph{geometric/symplectic/smooth/topological $\C$-realization} of a combinatorial line arrangement $\A$ is a collection of $|\A_\Lc|$ algebro-geometrically/symplectically/ smoothly/topologically embedded 2-spheres in $\cptwo$, each in the homology class of the complex projective line, whose intersections are all positive and transverse and are specified by the incidence relations of $\A$.	
	Similarly a \emph{geometric/smooth/topological $\R$-realization} of a combinatorial line arrangement $\A$ is a collection of $|\A_\Lc|$ algebro-geometrically/smoothly/topologically embedded circles in $\rptwo$, each in the homology class of the real projective line, with intersections specified by $\A$.
\end{definition}

\begin{definition}
	A combinatorial line arrangement $\mathcal{A}'$ is a \emph{sub-arrangement} of $\mathcal{A}$ if there is an incidence preserving injection 
	$$(\phi_\Lc,\phi_\P): \mathcal{A}'_{\mathcal{L}}\times \mathcal{A}'_{\mathcal{P}}\to \mathcal{A}_{\mathcal{L}}\times \mathcal{A}_{\mathcal{P}}.$$
	We say $\A'$ is a \emph{strict sub-arrangement} of $\A$ if it has the additional property that if $P\in \mathcal{A}_{\mathcal{P}}$ and there is some $L\in \mathcal{A}'_{\mathcal{L}}$ such that $P\cdot \phi_\Lc (L)=1$  then $P\in \im(\phi_\P)$. 
	
	Intuitively, in a strict sub-arrangement all the points of $\mathcal{A}$ which are on lines of $\mathcal{A}'$ are points of $\mathcal{A}'$.
\end{definition}

Clearly, if $\A'$ is not realizable in some category and $\A'$ is a sub-arrangement of $\A$, then $\A$ is not realizable in that category.

\subsection{Examples of combinatorial line arrangements}\label{sec:examples}

\begin{figure}[htb]
	\labellist
	\small\hair 2pt
	\pinlabel {$S_1$} [ ] at 67 133
	\pinlabel {$S_2$} [ ] at 99 133
	\pinlabel {$S_3$} [ ] at 117 133
	\pinlabel {$S_4$} [ ] at 153 28
	\pinlabel {$S_5$} [ ] at 140 10
	\pinlabel {$S_6$} [ ] at 25 25
	\pinlabel {$S_7$} [ ] at 66 18
	\pinlabel {$Q_1$} [ ] at 93 170
	\pinlabel {$Q_2$} [ ] at 150 87
	\pinlabel {$Q_3$} [ ] at 193 03
	\pinlabel {$Q_4$} [ ] at 93 -8
	\pinlabel {$Q_5$} [ ] at -10 03
	\pinlabel {$Q_6$} [ ] at 34 87
	\pinlabel {$Q_7$} [ ] at 105 55
	\endlabellist
	\centering
	\includegraphics[scale=1]{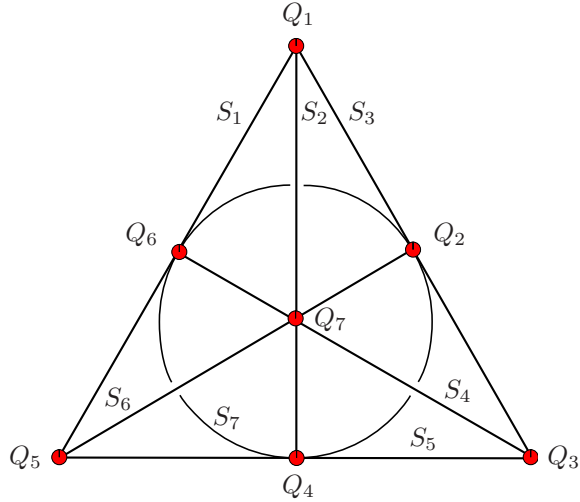}
	\caption{The Fano plane}
	\label{F:fano}
\end{figure}

A natural class of combinatorial line arrangements arise from the points and lines in the projective plane over a finite field $\FF_{p^q}$ for $p$ prime, which we will denote by $P(2,p^q)$. We will find obstructions to realizations of these combinatorial line arrangements. First we notice that the combinatorial line arrangement coming from $P(2,p^q)$ contains the combinatorial line arrangement coming from $P(2,p)$ as a (non-strict) sub-arrangement. We can see this by observing that the points of $P(2,p)$ in homogeneous coordinates $\{[x:y:z]| x,y,z\in \FF_p, (x,y,z)\neq (0,0,0)\}$ are a subset of the points of $P(2,p^q)$ in homogeneous coordinates $\{[x:y:z]| x,y,z\in \FF_{p^d}, (x,y,z)\neq (0,0,0)\}$ because $\FF_{p^q}$ is a field extension of $\FF_p$. Similarly the linear homogeneous equations with coefficients in $\FF_p$ which form the collection of lines in $P(2,p)$ also represent lines in $P(2,p^q)$, with the same incidences to points of $P(2,p)$. Therefore, it suffices to obstruct realizability for the projective planes $P(2,p)$. From now on, we will restrict to discussing this case.

$P(2,p)$ consists of $p^2+p+1$ points and $p^2+p+1$ lines. The points and lines are dual to each other. The incidence data in the case $p=2$ is schematically represented by figure \ref{F:fano} which is often referred to as the \emph{Fano plane}. The projective planes of order $p$ have the symmetry that every line contains $p+1$ points of multiplicity $p+1$. Generalizing this symmetry leads to the notion of an \emph{$(n_k)$-configuration}.
\begin{definition}[\cite{grunbaum:configurations}]
	An $(n_k)$-configuration is a combinatorial line arrangement $\A$ with $n$ lines and a subset of distinguished points $S\subset \A_\P$ with $|S|=n$ such that each point $p\in S$ has multiplicity $k$ and each line $L\in \A_\Lc$ contains $k$ of the distinguished points.
\end{definition}

Often the term configuration refers to a geometric or topological realization of such combinatorics, but we will use this terminology to refer to the combinatorial configuration so that we do not need to a priori know whether that combinatorics is realizable to name the combinatorial object. Observe that $P(2,p)$ is an $(n_k)$-configuration where $n=p^2+p+1$ and $k=p+1$. Note that in this case $n=k(k-1)+1$. In general, an $(n_k)$-configuration satisfies the inequality
$$n\geq k(k-1)+1$$
because each distinguished point has $k$ lines through it, and each of those lines contains $k-1$ other distinguished points. Note that fixing a particular line in a configuration, $L_0\in \A_{\Lc}$, the distinguished points account for the intersections of $L_0$ with $k(k-1)$ other lines. When $n=k(k-1)+1$ this accounts for all of the intersections of $L_0$ with any other line of the configuration but when $n>k(k-1)+1$ there are other points in the combinatorial line arrangement on $L_0$ which are not in the distinguished set. 

\section{Mod $d$ relations and branched covers}\label{S:relations}
Consider an arbitrary simply-connected $4$-manifold $X$, containing a collection $\F = \{F_1,\ldots,F_n\}$ of disjoint oriented surfaces of self-intersection $(-k)$; in this paper they will be spheres (often referred to as $(-k)$-spheres). We will assume $k \neq 0$, which, because the surfaces are disjoint, implies that the homology classes of the $F_i$ are linearly independent over $\Z$ (and $\Q$ for that matter). In particular, $n \leq m = \rank(H_2(X))$.  However, for $d$ dividing $k$, there may be a {\em mod $d$ relation} among the $F_i$ of the form
\begin{equation}\label{E:relation}
\sum_i a_i [F_{i}] = 0\quad \text{in } H_2(X;\Z_d)\quad\text{with } a_i \in \Z_d.
\end{equation}
Note that the set of mod $d$ relations is a $\Z_d$ module $\V$, in fact a submodule of $(\Z_d)^n = H_2(\F;\Z_d)$. Such objects are studied in coding theory. They can be interpreted in terms of cyclic branched coverings of $X$ of order $d$ with branch set a subset of $\F$, or $\Z_d$ coverings for short. Using the language of  linear codes, the {\em weight} of a relation is the number of nonzero $a_i$ appearing in the sum. The weight of the relation is highly significant in determining the topology of the branched cover because it determines the number of surfaces in the branch set. We will use the following notation: For each $i$, let $N_i$ denote the normal disk bundle of $F_i$, and let $N(\F)$ be the union of the $N_i$, with $F$ the union of the $F_i$. (We will just write $N$ when it is clear which $\F$ is being discussed.)

\begin{proposition}\label{P:covers}
 $\Z_d$ covers with branch set exactly $F$ are in one-to-one correspondence with mod $d$ linear relations in which every coefficient is non-zero.
\end{proposition}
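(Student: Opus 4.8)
The plan is to reduce the statement to a cohomology computation on the complement $W := X \setminus \inte N(\F)$. The first and most geometric step is the (standard) observation that a $\Z_d$-branched cover of $X$ with branch set contained in $\F$ is exactly the same data as a $\Z_d$-cover of $W$: one direction is restriction, and in the other direction a $\Z_d$-cover of $W$ extends uniquely over the disk bundles $N_i$. To see that the extension exists, note that the restriction of the cover to $\partial N_i$ is the $\Z_d$-cover of the circle bundle $\partial N_i$ determined by the monodromy $a_i \in \Z_d$ around the meridian $\mu_i$ of $F_i$; writing $e = \operatorname{ord}(a_i)$ we have $e \mid d \mid k$, so $-k/e \in \Z$ and the $e$-fold cyclic branched cover of $N_i$ over its zero section exists (it is the total space of the disk bundle of Euler number $-k/e$, mapped to $N_i$ by the fiberwise $e$-th power). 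Gluing $\gcd(a_i,d)$ copies of this model along the components of the cover of $\partial N_i$ produces the required $\Z_d$-equivariant filling, and uniqueness follows from the standard normal form for cyclic branched covers along a codimension-$2$ submanifold. (For $F_i \cong S^2$, as in the applications, $\partial N_i$ is a lens space and everything here is completely explicit; the general case is analogous.) Under this correspondence the surface $F_i$ lies in the branch locus precisely when $a_i \neq 0$, so branched covers with branch set \emph{exactly} $\F$ correspond to $\Z_d$-covers of $W$ all of whose meridional monodromies $a_1,\dots,a_n$ are nonzero.

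Next I would classify the $\Z_d$-covers of $W$ by $H^1(W;\zd) \cong \Hom(H_1(W),\zd)$ and compute this group from the long exact sequence of the pair $(X,W)$ with $\zd$-coefficients. Since $X$ is simply connected, $H^1(X;\zd) = 0$, so the connecting homomorphism identifies $H^1(W;\zd)$ with the kernel of the restriction map $\iota \colon H^2(X,W;\zd) \to H^2(X;\zd)$. By excision and the Thom isomorphism, $H^2(X,W;\zd) \cong \bigoplus_i H^2(N_i,\partial N_i;\zd) \cong (\zd)^n$, the $i$-th summand generated by the relative Thom class $\tau_i$ of $N_i$, and under this identification the class of a cover goes to the tuple $(a_1,\dots,a_n)$ of its meridional monodromies. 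The crux is the standard identification of $\iota(\tau_i)$ with the Poincaré dual of $[F_i]$ in $H^2(X;\zd)$; granting it, $\iota(a_1,\dots,a_n) = \mathrm{PD}\bigl(\sum_i a_i [F_i]\bigr)$, and since $\mathrm{PD}$ is an isomorphism ($X$ closed) the kernel is $\{(a_i) : \sum_i a_i [F_i] = 0 \text{ in } H_2(X;\zd)\} = \V$, the module of mod $d$ relations.

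Combining the two steps, the composite bijection identifies a $\Z_d$-branched cover of $X$ with branch set contained in $\F$ with the mod $d$ relation $\sum_i a_i [F_i] = 0$ whose coefficient $a_i$ records the monodromy of the cover around $\mu_i$; the branch set is exactly $\F$ if and only if every $a_i$ is nonzero, which is the asserted correspondence. I expect the main obstacle to be the extension-of-covers step of the first paragraph — pinning down the $\Z_d$-equivariant branched model over each $N_i$ and checking that it is unique — since the homological part is routine once the Thom-class description of Poincaré duality is invoked. It is worth recording explicitly where the hypothesis $d \mid k$ enters: it is precisely the condition making every element of $\zd$ an admissible meridional monodromy, i.e.\ $\Hom(\Z/k,\zd) = \zd$, so that the correspondence sees all of $(\zd)^n$ rather than a proper submodule.
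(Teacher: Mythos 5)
Your argument is correct and is essentially the paper's proof: both classify branched covers with branch set in $F$ by $H^1(X-\inte(N);\Z_d)$ (the extension-over-$N_i$ step you spell out is exactly what the paper delegates to Casson--Gordon) and then identify that group with the module of mod $d$ relations. The only difference is bookkeeping --- you run the identification through the cohomology sequence of $(X,W)$ and Thom classes, while the paper uses Poincar\'e--Lefschetz duality and the homology sequence of $(X,F)$; these are Poincar\'e-dual versions of the same computation.
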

\begin{proof}
Smooth $\Z_d$ covers with branch set contained in $F$ correspond to elements 
$$
\phi \in H^1(X - \inte(N);\zd) \cong \Hom(H_1(X - \inte(N)),\zd)
$$
where the branching is non-trivial along a component $F_i$ if and only if the evaluation of $\phi$ on its meridian is nonzero \cite[\S 2]{cassongordon}. Poincar\'e duality gives an isomorphism
$$
H^1(X - \inte(N);\zd) \cong H_3(X - \inte(N),\partial N;\zd).
$$
The dual of a class $\phi \in H^1(X - \inte(N);\zd)$ is represented by a relative $3$-cycle $K$ such that $\partial K \cap \partial N_i$ is the dual of the restriction of $\phi$ to $\partial N_i$.  By excision and the long exact sequence of $(X, F)$, we have
$$
 H_3(X - \inte(N),\partial N;\zd) \cong H_3(X,F;\zd) \cong \Ker\left[ H_2(F;\Z_d) \to H_2(X;\zd)\right].
 $$
Hence the relative homology class of $K$ corresponds uniquely to a relation $\sum_i a_i [F_{i}] = 0\in H_2(X;\zd)$ where the nonzero coefficients correspond to nontrivial branching.
\end{proof}
By applying this argument to various subsets of $\F$, we can obtain some additional information about the topology of the complement of $F$.  This is easiest to understand when $d$ is a prime $p$. Compare~\cite[Proposition 2.2]{ruberman:config}, where the case $p=2$ is discussed. We state a corollary of the above proof which we will need later.
\begin{corollary}\label{C:cyclic}
Suppose that there is a mod $p$ relation of the form~\eqref{E:relation} in which every $a_i$ is non-zero. Then
$H^1(X - F; \zp)$ has rank at least $2$ if and only if there is a mod $p$ relation of the form
$$
\sum_{F_i} a_i' [F_{i}] = 0\quad \text{in } H_2(X;\zp)
$$
with the vectors $(a_1,\ldots,a_n)$ and $(a_1',\ldots,a_n')$ linearly independent.
\end{corollary}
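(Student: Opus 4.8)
The plan is to leverage the correspondence established in the proof of Proposition~\ref{P:covers}, which identifies $H^1(X - \inte(N);\zp) \cong \Ker[H_2(F;\zp) \to H_2(X;\zp)]$, together with the observation that for the purposes of computing first cohomology with $\zp$ coefficients, the complements $X - F$ and $X - \inte(N)$ carry the same information. Indeed, $X - \inte(N)$ is homotopy equivalent to $X - F$ (deformation retract off the open tubular neighborhood), so $H^1(X - F;\zp) \cong H^1(X - \inte(N);\zp)$, and the displayed isomorphism in the proof of Proposition~\ref{P:covers} identifies this group with the module $\V$ of all mod $p$ relations among the $[F_i]$. Thus the rank of $H^1(X - F;\zp)$ as a $\zp$-vector space is exactly $\dim_{\zp}\V$.

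With that identification in hand, the statement becomes essentially linear algebra. First I would note that the hypothesis provides one relation $(a_1,\dots,a_n) \in \V$ with every coordinate nonzero, so $\dim_{\zp}\V \geq 1$. The claim ``$\rk H^1(X-F;\zp) \geq 2$'' then translates to ``$\dim_{\zp}\V \geq 2$,'' which holds if and only if there exists a relation $(a_1',\dots,a_n') \in \V$ not in the $\zp$-span of $(a_1,\dots,a_n)$ — equivalently, linearly independent from it over $\zp = \FF_p$. This is precisely the asserted equivalence, so the corollary follows immediately once the cohomology group is identified with $\V$.

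The one point requiring a small amount of care — and the step I expect to be the only genuine obstacle — is justifying that $H^1(X - F;\zp)$ and $H^1(X - \inte(N);\zp)$ agree, since $F$ is a union of embedded surfaces rather than a submanifold with a global product neighborhood in a trivial way; here one uses that $N = N(\F)$ is a regular (tubular) neighborhood of the closed set $F$ in the smooth manifold $X$, so $X - F$ deformation retracts onto $X - \inte(N)$ by pushing in radially along the normal disk bundle fibers. (When $X$ is only topologically flat one invokes the existence of a normal bundle / product neighborhood for locally flat surfaces, which is standard in dimension four by Freedman--Quinn; but in the generality of this section the surfaces are smooth, so the retraction is elementary.) Once this homotopy equivalence is recorded, the rest is the bookkeeping above: restate the isomorphism chain from the proof of Proposition~\ref{P:covers} with $\inte(N)$ replaced by its homotopy-equivalent complement of $F$, and read off that $\V$ has $\zp$-dimension at least $2$ exactly when a second, independent relation exists.

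I would therefore organize the write-up as: (1) recall $H^1(X-F;\zp) \cong H^1(X-\inte(N);\zp)$ via the tubular neighborhood retraction; (2) recall from the proof of Proposition~\ref{P:covers} that this group is isomorphic to the $\zp$-module of all mod $p$ relations $\sum a_i[F_i] = 0$ in $H_2(X;\zp)$; (3) observe that a $\zp$-vector space has dimension $\geq 2$ iff, given one nonzero vector, there is a second vector independent from it; (4) apply this with the given everywhere-nonzero relation as the distinguished nonzero vector. The hypothesis that the first relation has all coefficients nonzero is not actually needed for the equivalence itself — it only guarantees $\dim_{\zp}\V \geq 1$ so that the statement is non-vacuous and matches the setup of the branched-cover discussion — so I would remark on that to keep the logic transparent.
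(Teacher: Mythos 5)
Your proof is correct and takes essentially the same route as the paper: there the corollary is stated as an immediate consequence of the isomorphism chain in the proof of Proposition~\ref{P:covers}, which (after the standard identification $H^1(X-F;\zp)\cong H^1(X-\inte(N);\zp)$ via the tubular neighborhood) identifies this group with the $\zp$-module of mod $p$ relations, whereupon the statement reduces to the linear algebra you give. Your side remark that only the nonvanishing of the vector $(a_1,\ldots,a_n)$, rather than of each coordinate, is needed for the equivalence is also accurate.
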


\subsection{Line arrangements and mod $d$ relations}\label{sec:linerels}

In this paper we will be focusing on mod $d$ relations amongst disjoint spheres in a blow-up of $\cptwo$ which arise as the proper transforms of lines in an arrangement after blowing up at the intersection points. Let $L_1,\cdots, L_n$ denote the lines of the geometric/symplectic/smooth/topological realization, and $F_1,\cdots, F_n$ their corresponding proper transforms after blowing up at $N$ points in the arrangement. The homology class of the proper transform $F_i$ is
$$[F_i]=h-\sum_{j=1}^N (L_i\cdot P_j)e_j$$
where $L_i\cdot P_j$ denotes the incidence (either $0$ or $1$) between the line and the point. In particular, if we have a mod $d$ relation as in equation \eqref{E:relation}, then looking at the coefficient of $h$ we find \begin{equation}\sum_{i=1}^\ell a_i \equiv 0 \mod d \label{eqn:plines}\end{equation}
and looking at the coefficients of the exceptional classes we find
\begin{equation}\sum_{i=1}^\ell a_i (L_i\cdot P_j) \equiv 0 \mod d
	\label{eqn:exceptional}
\end{equation} 
for every point $P_j$, $j=1,\cdots, N$. This can be interpreted in the original line arrangement by associating the weight $a_i$ to the line $L_i$, and asking that at each intersection point which we blow up at, the sum of the weights of the lines through that point should be $0$ mod $d$. Additionally, the total sum of the weights is be $0$ mod $d$. This geometric interpretation through the line arrangement can be useful in understanding the set of all mod $d$ relations for $F_1,\cdots, F_n$.

For example, if we blow up at all points in a hypothetical realization of the Fano plane $P(2,2)$ ($d=2$), $\mod d$ relations have coefficients $0$ or $1$. Therefore these relations are binary codewords corresponding to a subset of lines of the Fano plane in which  each vertex appears in an even number of lines. For example, the subarrangement in figure \ref{F:relation} corresponds to the relation $F_2+ F_3 + F_5 + F_6 \equiv 0 \pmod{2}$. Pictorially, we see that there is an even number of lines of the sub-arrangement through every point $Q_j$. Computationally the relation is easily verified:
\begin{equation*}
\begin{split}
F_2+ F_3 + F_5 + F_6 & =
(h - e_1 - e_4 - e_7) + (h - e_1 - e_2 - e_3)\\ &\qquad + (h - e_3 - e_4 - e_5) + (h - e_2 - e_5 - e_7)  \\
&  = 4h - 2(e_1 + e_2 + e_3 + e_4 + e_5 + e_7).
\end{split}
\end{equation*}
\begin{figure}[htb]
	\labellist
	\small\hair 2pt
	\pinlabel {$L_2$} [ ] at 99 133
	\pinlabel {$L_3$} [ ] at 117 133
	\pinlabel {$L_5$} [ ] at 140 10
	\pinlabel {$L_6$} [ ] at 25 25
	\pinlabel {$Q_1$} [ ] at 93 170
	\pinlabel {$Q_2$} [ ] at 150 87
	\pinlabel {$Q_3$} [ ] at 193 03
	\pinlabel {$Q_4$} [ ] at 93 -8
	\pinlabel {$Q_5$} [ ] at -10 03
	\pinlabel {$Q_6$} [ ] at 34 87
	\pinlabel {$Q_7$} [ ] at 105 55
	\endlabellist
	\centering
	\includegraphics[scale=1]{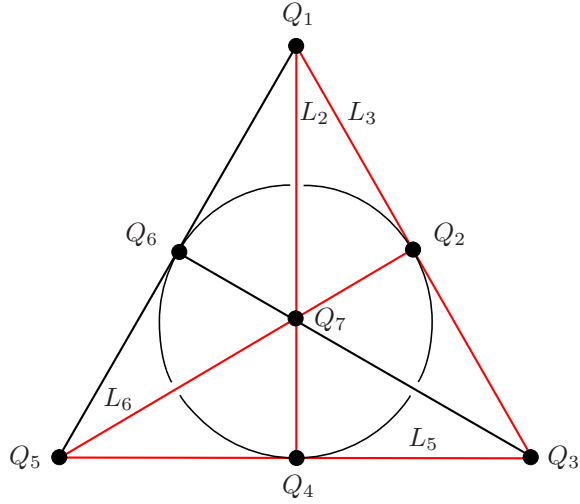}
	\caption{A mod $2$ relation}
	\label{F:relation}
\end{figure}

\subsection{The topology of branched covers}
For the remainder of this section, we review some relations between the homology of manifold and its $d$-fold branched or unbranched cover. The basic homological invariants for a $4$-manifold are the first and second homology, and the signature of the intersection form.  We will see that the combinatorics of a configuration places constraints on, and in some cases determines, these invariants for branched covers associated to a configuration.

We consider a cyclic group $G \cong \Z_d$ acting on a space $\xtilde$, with quotient map $\pi: \xtilde \to X = \xtilde/G$. Fixing a generator $\tau \in G$,  we decompose the homology groups $H_*(\xtilde;\C)$ into eigenspaces for the action of $\tau_*$. Writing $\omega = e^{2 \pi i/d}$, the $\omega^r$ eigenspace of the action of $\tau_*$ on $H_*(\xtilde;\C)$ is denoted $H_k(\xtilde;\C)^{(r)}$, and its rank by $b_k(r)$.  We will study mainly the case when $\xtilde$ is a $4$-manifold, and the action is semifree (free away from the fixed point set) with $2$-dimensional fixed point set. The fixed point set $\ftilde$ maps homeomorphically down to $F \subset X$, and we write $\vtilde \to V = X -F$ for the associated unbranched cover.  Define the $r$-Euler characteristic by
$$\chi_r(\xtilde)=\sum_k (-1)^k b_k(r)(\xtilde).$$
 
We will make use of the following facts~\cite{gilmer:configurations,edmonds:aspects}.
\begin{proposition}\label{P:unbranched}
With notations as in the preceding paragraph, we have:
\begin{enumerate}
\item $H_k(\vtilde;\C)^{(0)} \cong H_k(V;\C)$.
\item $\chi_r(\vtilde)=\chi(V)$\label{item2}.
\item If $d$ is a power of a prime $p$, then $b_1(r)(\vtilde) \leq \dim_{\Z_p}H_1(V; \Z_p) -1$ for $r \neq 0$.\label{item3}
\end{enumerate}
\end{proposition}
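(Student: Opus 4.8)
The plan is to establish each of the three items by combining the transfer argument for finite group actions with the eigenspace decomposition, treating the covering $\vtilde \to V$ first since it is the unbranched piece and the branched statements in the excerpt are built on top of it. First I would recall that for a free action of $G \cong \Z_d$ on $\vtilde$ with quotient $V$, the projection $\pi$ induces, after tensoring with $\C$, an isomorphism $H_k(\vtilde;\C)^{(0)} \cong H_k(V;\C)$: the averaging (transfer) map $\frac{1}{d}\sum_{g\in G} g_*$ is a projection onto the invariant part $H_k(\vtilde;\C)^{(0)}$, and $\pi_*$ restricted to the invariants is inverse to the transfer up to the factor $d$. This is item (1), and it is essentially formal once one has the transfer; I would cite \cite{gilmer:configurations,edmonds:aspects} for the precise form rather than reproving it.

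For item (2), the key observation is that the chain complex $C_*(\vtilde;\C)$ is a complex of free $\C[G]$-modules (because the action is free), so it splits $G$-equivariantly as a direct sum over the characters $\omega^r$ of subcomplexes, each of which computes $H_*(\vtilde;\C)^{(r)}$. Since $\C[G] = \bigoplus_{r} \C_{\omega^r}$ as a $G$-module, each free rank-one summand contributes one copy of $\C$ to every eigenspace, so the dimension of the $\omega^r$-part of each chain group is independent of $r$ and equals the corresponding chain group of $V$. Taking alternating sums, $\chi_r(\vtilde)$ equals $\chi(V)$ for every $r$ — this is just the multiplicativity of Euler characteristics in covers, refined eigenspace by eigenspace. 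I would phrase this as: $\chi_r(\vtilde)$ is computed from the $\omega^r$-summand, whose Euler characteristic is $\tfrac{1}{d}\chi(\vtilde) = \chi(V)$ independently of $r$.

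Item (3) is the one that genuinely uses that $d = p^a$ is a prime power, and it is the main obstacle. The idea is that $H_1(V;\Z_p)$ surjects onto $H^1(V;\Z_p)^{\vee}$-worth of cyclic $\Z_p$-covers, and the cover $\vtilde \to V$ corresponds to a surjection $H_1(V;\Z) \to \Z_d$; reducing mod $p$ gives a nonzero element $\phi \in \Hom(H_1(V;\Z_p),\Z_p)$. The strategy is to compare $b_1(r)(\vtilde)$ for $r \neq 0$ with $\dim_{\Z_p} H_1(V;\Z_p)$ using the $\Z_p$-cohomology Gysin/transfer sequence for the intermediate $\Z_p$-cover $V' \to V$ (the unique one dominated by $\vtilde$), together with the fact that over $\Z_p$ the group ring $\Z_p[\Z_p]$ is local, so the eigenspace decomposition over $\C$ for $r\neq 0$ is controlled by the augmentation ideal. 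Concretely, one shows that $\sum_{r\neq 0} b_1(r)(\vtilde) \leq d\cdot\big(\dim_{\Z_p}H_1(V;\Z_p) - 1\big)$ or, more sharply via the $\Z_p$-Smith sequence, that each individual $b_1(r)$ is bounded by $\dim_{\Z_p}H_1(V;\Z_p) - 1$ — the $-1$ coming from the fact that the class $\phi$ defining the cover dies upstairs, reducing the rank of $H^1$ in each non-trivial eigenspace by at least one. I would carry this out by first passing to the $\Z_p$-cover, invoking that the corresponding $\Z[\Z_p]$-chain complex, after localizing, forces $b_1(r)(\vtilde) = b_1(1)(\vtilde)$ for all $r \neq 0$ when $d = p$, and then bootstrapping to $d = p^a$ by factoring the cover through the tower of $\Z_p$-covers and applying the prime case repeatedly. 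The delicate point to get right is the bookkeeping showing the estimate does not degrade as one climbs the tower; I expect to need the exactness properties of the transfer sequence with $\Z_p$ coefficients and the vanishing of the relevant $\Z_p[\Z_p]$-Tor terms, and this is where I would lean on the references \cite{gilmer:configurations,edmonds:aspects} for a clean statement if the tower argument becomes unwieldy.
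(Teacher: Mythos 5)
Your proposal is correct and, modulo item (2), ends up exactly where the paper does: the paper's entire proof consists of citing the transfer argument for item (1) and Propositions 1.1 and 1.5 of Gilmer's paper for items (2) and (3), supplemented by two lemmas of Thomas--Wood to pass from $r=1$ to general $r$; your items (1) and (3) ultimately defer to the same sources. The one place you genuinely diverge is item (2), where instead of citing Gilmer you give a direct argument: since the action on $\vtilde$ is free and $V$ is homotopy equivalent to a finite complex, the equivariant chain complex is a complex of free $\C[\Z_d]$-modules, so each chain group has the same dimension in every character summand and $\chi_r(\vtilde)=\tfrac{1}{d}\chi(\vtilde)=\chi(V)$ for every $r$; this is complete and more self-contained than the citation. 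For item (3) your sketch names the right ingredients (the Wang sequence identifying $H_1(\vtilde;\Z_p)/(t-1)$ with the kernel of the classifying map, Nakayama over the local ring $\Z_p[\Z_{p^a}]$, hence a generating set of size $\dim_{\Z_p}H_1(V;\Z_p)-1$), but one assertion needs correction: $b_1(r)(\vtilde)=b_1(1)(\vtilde)$ holds only for $(r,d)=1$, not for all $r\neq 0$ once $d=p^a$ with $a>1$. For the remaining $r$ the paper invokes Thomas--Wood to get $b_1(r)(\vtilde)=b_1(1)(\vtilde/\Z_{d/r})$, and the ``delicate bookkeeping'' you worry about in the tower is resolved precisely here: each such quotient is again a cyclic cover of $p$-power order of the \emph{same} base $V$, classified by a reduction of the same class $\phi$, so the prime-power case of Gilmer's bound applies verbatim with the unchanged right-hand side $\dim_{\Z_p}H_1(V;\Z_p)-1$ and nothing degrades as one climbs the tower.
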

\begin{proof}
The first item is a standard transfer argument; the second and third are Propositions 1.1 and 1.5, respectively, in~\cite{gilmer:configurations}. Technically, Gilmer's Proposition 1.5 states \eqref{item2} and \eqref{item3} for $r=1$ but the result holds for any $r$: If $(r,d)= 1$, then it follows from~\cite[Lemma 7.2]{thomas-wood} which states that $b_1(r)(\vtilde) = b_1(1)(\vtilde)$. If $r| d$, then~\cite[Lemma 7.4]{thomas-wood} implies that $b_1(r)(\vtilde) = b_1(1)(\vtilde/(\Z_{d/r}))$.
\end{proof}
We can get similar results for the branched cover as well.

\begin{corollary}\label{C:branched}
Suppose that $X$ is a $4$-manifold, and that $F$ is a union of spheres each of which has non-trivial normal bundle.
\begin{enumerate}
\item $H_k(\xtilde;\C)^{(0)} \cong H_k(X;\C)$.
\item $H_k(\xtilde;\C)^{(r)} \cong H_k(\vtilde;\C)$ for $r \neq 0$.
\item If $d$ is a power of a prime $p$, then $b_1(r)(\xtilde) \leq \dim_{\Z_p}H_1(V; \Z_p) -1$.
\item The Euler characteristic of the $d$-fold branched cover $\xtilde$ is given by:
\begin{equation}\label{eqn:eulerchar}
\chi(\xtilde)=d(\chi(X)-2|F|)+2|F|.
\end{equation}
where $|F|$ denotes the number of components of $F$.
\end{enumerate}
\end{corollary}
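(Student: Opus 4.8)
The plan is to present $\xtilde$ as the union of the unbranched cover $\vtilde\to V$ with a $\Z_d$-invariant tubular neighborhood $\widetilde N$ of the fixed set $\ftilde$, and to extract all four assertions from an equivariant Mayer--Vietoris computation together with a transfer argument and Proposition~\ref{P:unbranched}. Concretely, writing $N=N(\F)$ and $\widetilde N=\pi^{-1}(N)$, and identifying $\vtilde$ (up to homotopy) with $\pi^{-1}(X-\inte(N))$, one has $\xtilde=\vtilde\cup_{\partial\widetilde N}\widetilde N$; here each component $\widetilde N_i$ is the $d$-fold cyclic cover of the normal disk bundle $N_i$ branched along $F_i$, and the fixed surface $\ftilde_i\subset\widetilde N_i$ maps homeomorphically onto $F_i$.

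The step I expect to require the most care is identifying the $\Z_d$-module structure of the homology of these local pieces. Since $F_i\cong S^2$, the branched cover $\widetilde N_i$ is again a disk bundle over $S^2$ with $\ftilde_i$ as zero section; hence $\widetilde N_i\simeq S^2$ and $H_2(\widetilde N_i;\C)$ is carried by the pointwise-fixed class $[\ftilde_i]$. Consequently $\Z_d$ acts trivially on $H_*(\widetilde N_i;\C)$, so $H_*(\widetilde N;\C)^{(r)}=0$ for all $r\ne0$. Because the normal Euler numbers are non-zero, $\partial N_i$ is a lens space, hence an oriented rational homology $3$-sphere with finite fundamental group; its finite cyclic cover $\partial\widetilde N_i$ is therefore again an oriented rational homology $3$-sphere, and since the deck action is orientation-preserving it is trivial on $H_0$ and $H_3$, so $H_*(\partial\widetilde N;\C)^{(r)}=0$ for $r\ne0$.

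With these facts in hand the remaining arguments are formal. Part (1) is the standard transfer isomorphism $H_k(\xtilde;\C)^{(0)}=H_k(\xtilde;\C)^{\Z_d}\cong H_k(\xtilde/\Z_d;\C)=H_k(X;\C)$, exactly as in item (1) of Proposition~\ref{P:unbranched}. For part (2) I would run the Mayer--Vietoris sequence of $\xtilde=\vtilde\cup\widetilde N$: it is a sequence of $\C[\Z_d]$-modules, and since $\C[\Z_d]$ is semisimple one may pass to $\omega^r$-isotypic summands, whereupon for $r\ne0$ the $\widetilde N$- and $\partial\widetilde N$-terms vanish and $H_k(\xtilde;\C)^{(r)}\cong H_k(\vtilde;\C)^{(r)}$. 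Part (3) for $r\ne0$ is then immediate from (2) and Proposition~\ref{P:unbranched}\eqref{item3}, since $b_1(r)(\xtilde)=\dim_\C H_1(\vtilde;\C)^{(r)}=b_1(r)(\vtilde)$; the case $r=0$ reduces via (1) to $b_1(X)<\dim_{\Z_p}H_1(V;\Z_p)$, which one reads off the long exact sequence of $(X,V)$ using that $X$ carries a nontrivial $\Z_d$-cover branched along $F$ (so $H_1(V;\Z_p)\neq 0$ by Proposition~\ref{P:covers}).

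Finally, part (4) is Euler-characteristic bookkeeping, additive under the above decomposition: $\chi(\partial\widetilde N)=0$ because $\partial\widetilde N$ is a closed odd-dimensional manifold, $\chi(\widetilde N)=\chi(\ftilde)=\chi(F)=2|F|$ because $\widetilde N$ retracts onto $\ftilde\cong F$, and $\chi(\vtilde)=d\,\chi(V)$ because $\vtilde\to V$ is an honest $d$-fold covering, while $\chi(V)=\chi(X)-\chi(N)+\chi(\partial N)=\chi(X)-2|F|$. Assembling these, $\chi(\xtilde)=\chi(\vtilde)+\chi(\widetilde N)-\chi(\partial\widetilde N)=d\bigl(\chi(X)-2|F|\bigr)+2|F|$. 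Apart from the eigenspace analysis of the branch-point neighborhoods in the second paragraph, every step is routine.
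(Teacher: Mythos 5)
Your proof is correct and follows essentially the same route as the paper's: the paper's one-line argument is exactly the observation that $G$ acts trivially on the homology of $N(\ftilde)$ and $\partial N(\ftilde)$ (which you justify via the disk-bundle/lens-space structure and the nonvanishing normal Euler numbers), followed by a Mayer--Vietoris and transfer argument for items (1)--(3) and Euler-characteristic additivity for item (4). You simply supply the details the paper leaves implicit, including the $r=0$ case of item (3), which the paper does not separately address.
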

\begin{proof}
Since $G$ acts as the identity on the homology of $N(\ftilde)$ and $\partial N(\ftilde)$, the first three items follow by a Mayer-Vietoris argument. The last item follows by a straightforward simplex-counting argument, or from the first two.
\end{proof}

When $X$ is a $4$-manifold, we can also consider the equivariant intersection form. Define $b_2^\pm(r)$ by restricting the intersection form on $H_2(\xtilde)\otimes \C$ to $H_2(\xtilde;\C)^{(r)}$, and let  
$$
\epsilon_r(\xtilde)= b_2^+(r) -b_2^-(r)
$$
denote the signature of the intersection form of $\xtilde$ restricted to the eigenspace. A standard transfer argument identifies $\epsilon_0(\xtilde)$ as the signature of the quotient $X = \xtilde/G$.  A calculation utilizing the G-signature Theorem~\cite{atiyah-singer:III,gordon:G-signature} yields the following result first computed by Rohlin~\cite{rohlin:genus} and proved in the format we will utilize by Casson and Gordon.

\begin{theorem}[Casson-Gordon \cite{cassongordon}]\label{thm:epsilonsignature}
	Suppose $\xtilde$ is a $d$-fold branched cover over $X$ whose branch set is a smooth (but possibly disconnected) submanifold $F\subset X$ with lift $\widetilde{F}\subset \xtilde$. If the generator of the covering action rotates the normal bundles to each component of $\widetilde{F}$ by $\pm 2\pi/d$ then
$$
\varepsilon_r(\xtilde)=\sign(X)-\frac{2r(d-r)}{d^2}[F]^2.
$$
\end{theorem}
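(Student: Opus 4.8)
The plan is to recover the eigenspace signatures $\varepsilon_r(\xtilde)$ from the Atiyah--Singer $G$-signature theorem via a finite Fourier transform, following Casson--Gordon~\cite{cassongordon}; here $X$ (and hence $\xtilde$) is closed, as in the applications. Fix a generator $\tau$ of the deck group $\Z_d$ and set $\omega = e^{2\pi i/d}$. Choosing a $\tau$-invariant decomposition $H_2(\xtilde;\C) = H_2^+ \oplus H_2^-$, let $\sign(\tau^s,\xtilde) = \tr(\tau^s | H_2^+) - \tr(\tau^s | H_2^-)$ denote the equivariant signature. Since $\tau$ acts as $\omega^r$ on $H_2(\xtilde;\C)^{(r)}$, we have $\sign(\tau^s,\xtilde) = \sum_{r=0}^{d-1}\omega^{rs}\,\varepsilon_r(\xtilde)$, and Fourier inversion yields $\varepsilon_r(\xtilde) = \frac1d\sum_{s=0}^{d-1}\omega^{-rs}\sign(\tau^s,\xtilde)$. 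The $s=0$ term contributes $\frac1d\sign(\xtilde)$, the same for every $r$, so it drops out of the difference $\varepsilon_r(\xtilde) - \varepsilon_0(\xtilde)$; thus it suffices to (i) identify $\varepsilon_0(\xtilde) = \sign(X)$ and (ii) evaluate $\sign(\tau^s,\xtilde)$ for $1 \le s \le d-1$. Step (i) is the transfer argument already quoted in the text: $\pi^*$ carries $H^2(X;\Q)$ isomorphically onto the $\tau$-invariant subspace and multiplies the intersection form by $d$, hence preserves its signature.

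For step (ii) I would apply the $G$-signature theorem~\cite{atiyah-singer:III,gordon:G-signature}. Because the covering is free off $\widetilde F$, every $\tau^s$ with $0 < s < d$ has fixed-point set exactly $\widetilde F$, with no isolated fixed points, and rotates the normal bundle of each component by $\pm 2\pi s/d$. The $G$-signature theorem then expresses $\sign(\tau^s,\xtilde)$ as a sum of local contributions supported on $\widetilde F$, each of which (for a fixed surface with normal rotation angle $\theta$) is $\csc^2(\theta/2)$ times the self-intersection of that component --- in particular insensitive to the sign ambiguity in $\pm 2\pi s/d$. The local model $w \mapsto w^d$ of a cyclic branched cover shows the normal Euler number of $\widetilde F_i$ is $1/d$ that of $F_i$, so $[\widetilde F]^2 = [F]^2/d$, and summing over components gives
\[
\sign(\tau^s,\xtilde) \;=\; \csc^2\!\Bigl(\tfrac{\pi s}{d}\Bigr)\,[\widetilde F]^2 \;=\; \frac{[F]^2}{d}\,\csc^2\!\Bigl(\tfrac{\pi s}{d}\Bigr), \qquad 1 \le s \le d-1.
\]

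Substituting into the inversion formula and pairing $s$ with $d-s$ (which makes the sum visibly real and turns $\omega^{-rs}-1$ into $-2\sin^2(\pi rs/d)$) gives
\[
\varepsilon_r(\xtilde) - \varepsilon_0(\xtilde) \;=\; \frac{[F]^2}{d^2}\sum_{s=1}^{d-1}\bigl(\omega^{-rs}-1\bigr)\csc^2\!\Bigl(\tfrac{\pi s}{d}\Bigr) \;=\; -\,\frac{2[F]^2}{d^2}\sum_{s=1}^{d-1}\frac{\sin^2(\pi rs/d)}{\sin^2(\pi s/d)}.
\]
The classical identity $\sum_{s=1}^{d-1}\sin^2(\pi rs/d)/\sin^2(\pi s/d) = r(d-r)$ for integers $0 \le r \le d$ (the same computation appears in~\cite{cassongordon}) then gives $\varepsilon_r(\xtilde) - \varepsilon_0(\xtilde) = -2r(d-r)[F]^2/d^2$, and combining with $\varepsilon_0(\xtilde) = \sign(X)$ completes the proof.

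I expect the main obstacle to be the convention-dependent bookkeeping of step (ii): pinning down the exact local contribution of a fixed surface in the $4$-dimensional $G$-signature theorem, with the correct sign, and correctly accounting for the factor $1/d$ between $[\widetilde F]^2$ and $[F]^2$. The remainder --- Fourier inversion, the transfer identification of $\varepsilon_0$, and the evaluation of the trigonometric sum --- is routine, and the $G$-signature theorem is used as a black box.
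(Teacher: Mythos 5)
The paper does not prove this statement; it quotes it directly from Casson--Gordon, crediting the underlying computation to Rohlin and the $G$-signature theorem. Your reconstruction is correct and is essentially the argument of the cited source: Fourier inversion of the equivariant signatures, the transfer identification $\varepsilon_0(\xtilde)=\sign(X)$, the $G$-signature local contribution $[\widetilde F]^2\csc^2(\pi s/d)$ with $[\widetilde F]^2=[F]^2/d$, and the trigonometric identity $\sum_{s=1}^{d-1}\sin^2(\pi rs/d)/\sin^2(\pi s/d)=r(d-r)$.
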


\section{Line arrangement obstructions from branched coverings}
In this section we will find certain mod $d$ relations and calculate the invariants of their branched covers. The mod $d$ relations which have no linearly independent subrelations are most useful for our purposes because these correspond to branch covers with $b_1=0$ by Corollary \ref{C:branched}, so we can determine more about their topology. 

For the finite projective planes, $P(2,p)$, the mod $p$ relations of the proper transforms of the lines in any realization are specified by equations \ref{eqn:plines} and \ref{eqn:exceptional}. In terms of the incidence matrix $A_p$ for the combinatorial line arrangement $P(2,p)$, collection of equations \ref{eqn:exceptional} for $j=1,\cdots, N$ can be encoded as
$$\left[a_1 \cdots a_N \right] A_p = [0 \cdots 0] \mod p.$$
 The solutions form a well-studied linear code over $\FF_p$.  By interpreting the mod $p$ relations in the coding theory language, it is shown in \cite[Theorem 3]{bagchi-inamdar}  that the minimum weight among the nonzero relations for $P(2,p)$ is $2p$. In fact all of these weight $2p$ relations are examples in the more general family of sub-arrangements $\mathcal{B}_{\alpha,\beta}^p$ defined below. We use the line arrangement interpretation from section \ref{sec:linerels} to prove the more general statement that the relations in this family have no linearly independent sub-relations in Proposition \ref{prop:Babprels} below.

\subsection{A family of minimal mod $p$ relations}

Consider the family of line arrangements, $\mathcal{B}_{\alpha,\beta}^p$ with $p\alpha+p\beta$ lines arranged so that $L_1,\cdots L_{p\alpha}$ 
intersect at a single point $P_1$ and the other lines $L_{p\alpha+1},\cdots, L_{p\alpha+p\beta}$ all intersect at a different point $P_2$. The remaining points in the arrangement are double points where a line from the first group intersects a line from the second group. Note that there are a total of $2+p^2\alpha\beta$ points. See figure \ref{fig:palphabetarelation}. 

When $\alpha=\beta=1$, $\B_{1,1}^p$ is a strict sub-arrangement of $P(2,p)$. This can be seen by choosing any two distinct points in $P(2,p)$ to be $P_1$ and $P_2$, and then including all the lines through $P_1$ or $P_2$ except the unique line through both.

Now suppose that we blow-up at the $2+p^2\alpha\beta$ points in any $\C$-realization of $\B_{\alpha,\beta}^p$ and consider the proper transforms $L_1,\cdots, L_{p(\alpha+\beta)}$ of the lines. There is a mod $p$ relation with coefficient $1$ for $L_1,\cdots, L_{p\alpha}$ (the lines through $P_1$) and $-1$ for $L_{p\alpha+1},\cdots, L_{p\alpha+p\beta}$ (the lines through $P_2$). We can verify this is a mod $p$ relation by checking that at each intersection point, the sum of coefficients of the lines passing through $P_j$ is zero mod $p$. At $P_1$ and $P_2$, the sums are $\alpha p(1)$ and $\beta p(-1)$ which are clearly zero mod $p$. At any other vertex, $P_j$, there one line with with coefficient $1$ and another with coefficient $-1$ summing to $0$.

\begin{figure}
	\centering
	\includegraphics[scale=.4]{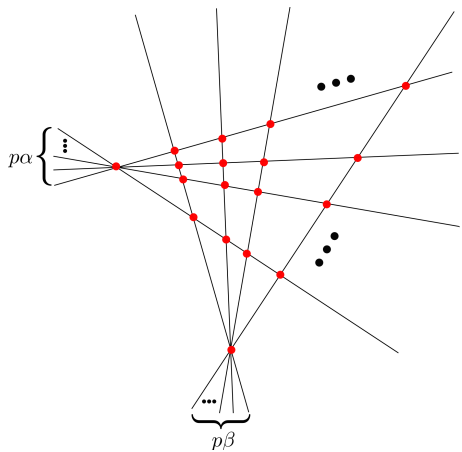}
	\caption{The $\B_{\alpha,\beta}^p$ sub-arrangement.}
	\label{fig:palphabetarelation}
\end{figure}

\begin{proposition}\label{prop:Babprels}
There are no mod $p$ relations which are linearly independent from $(1,\cdots, 1, -1,\cdots, -1)$ amongst the proper transforms of $L_1,\cdots , L_{p(\alpha+\beta)}$ (the lines of $\B_{\alpha,\beta}^p$) after blowing up at all intersection points of $\B_{\alpha,\beta}^p$.
\end{proposition}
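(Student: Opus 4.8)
The plan is to write out the linear system defining the module of mod $p$ relations for $\B_{\alpha,\beta}^p$ and observe that it has a one-dimensional solution space, which is exactly the assertion. Label the lines through $P_1$ by their weights $a_1,\dots,a_{p\alpha}$ and the lines through $P_2$ by weights $b_1,\dots,b_{p\beta}$. By the line-arrangement interpretation of Section~\ref{sec:linerels}, a mod $p$ relation among the proper transforms (after blowing up every intersection point of $\B_{\alpha,\beta}^p$) is precisely a choice of such weights in $\Z_p$ for which the sum of the weights of the lines through each blown-up point vanishes mod $p$, together with the condition $\sum_i a_i + \sum_j b_j \equiv 0$ coming from the coefficient of $h$ in~\eqref{eqn:plines}.

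First I would list the equations coming from the three kinds of points. The intersection points of $\B_{\alpha,\beta}^p$ are $P_1$, $P_2$, and the $p^2\alpha\beta$ transverse double points; at the double point where the line of weight $a_i$ meets the line of weight $b_j$, equation~\eqref{eqn:exceptional} reads $a_i+b_j\equiv 0 \pmod p$, for every $1\le i\le p\alpha$ and $1\le j\le p\beta$. At $P_1$ it reads $\sum_{i=1}^{p\alpha}a_i\equiv 0$, and at $P_2$, $\sum_{j=1}^{p\beta}b_j\equiv 0$.

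The key step is that the double-point equations alone pin the relation down up to a scalar. Subtracting $a_{i'}+b_j\equiv 0$ from $a_i+b_j\equiv 0$ gives $a_i\equiv a_{i'}$, so all of the $a_i$ equal a common value $a\in\Z_p$; symmetrically all the $b_j$ equal a common value $b$, and any single double-point equation then forces $b\equiv -a$. Hence every mod $p$ relation has the form $a\cdot(1,\dots,1,-1,\dots,-1)$. The leftover equations impose nothing further: at $P_1$ and $P_2$ they read $p\alpha\, a\equiv 0$ and $-p\beta\, a\equiv 0$, and the $h$-coefficient condition reads $(p\alpha-p\beta)a\equiv 0$, all automatic mod $p$. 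Thus the module of mod $p$ relations is the rank-one submodule of $(\Z_p)^{p(\alpha+\beta)}$ spanned by $(1,\dots,1,-1,\dots,-1)$, which is precisely the statement that no relation is linearly independent from this vector.

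I do not anticipate a genuine obstacle here; the argument just solves a small linear system. The only point requiring care is the bookkeeping of intersection points --- confirming that no three lines of $\B_{\alpha,\beta}^p$ are concurrent except at $P_1$ or $P_2$, so that the equation list above is complete --- and this is immediate from the definition of the configuration (equivalently, from the stated count of $2+p^2\alpha\beta$ points).
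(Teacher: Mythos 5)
Your proposal is correct and follows essentially the same route as the paper: both arguments use equation~\eqref{eqn:exceptional} at the double points to force all weights on lines through $P_1$ to agree, all weights on lines through $P_2$ to agree, and the two common values to be negatives of each other, so every relation is a multiple of $(1,\dots,1,-1,\dots,-1)$. Your explicit check that the equations at $P_1$, $P_2$, and the $h$-coefficient condition are automatically satisfied is a harmless bit of extra verification that the paper omits (having already confirmed earlier that this vector is a relation).
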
 
\begin{proof}
	Suppose there were coefficients $(a_1,\cdots, a_{p\alpha+p\beta})$ yielding a non-trivial mod $p$ relation. Then some $a_i\neq 0$. Without loss of generality, say $a_1\neq 0$. Then using equation \eqref{eqn:exceptional} for the double points along $L_1$ we find that $a_{p\alpha+1}\equiv \cdots \equiv a_{p\alpha+p\beta}\equiv -a_1 \mod p$. Using equation \eqref{eqn:exceptional} for the double points along $L_{p\alpha+1}$ we get that $a_2\equiv \cdots \equiv a_{p\alpha}\equiv a_1 \mod p$. Therefore all mod $p$ relations are multiples of $(1,\cdots, 1, -1, \cdots, -1)$.
\end{proof}

\subsection{The topology of the branched cover}
Here we calculate invariants of branch covers of blow-ups of $\cptwo$ with branch set the proper transform of the blow-up of a $\B_{\alpha,\beta}^p$ arrangement.

\begin{proposition}\label{prop:bettiBpab}
	Consider a $\C$-realization of a line arrangement $\A$ with a sub-arrangement $\B_{\alpha,\beta}^p$. Blow-up $\cptwo$ at $N$ points such that the points lying on $\B_{\alpha,\beta}^p$ are precisely the intersections between lines in that sub-arrangement (e.g. if $\B_{\alpha,\beta}$ is a strict sub-arrangement, the condition is that the blow-up points include all points of $\B_{\alpha,\beta}$). Let   $F$ denote the union of the proper transforms of the lines in $\B_{\alpha,\beta}^p$. Let $\xtilde$ be the $p$-fold branched cover of $X=\cptwo\conn N\cptwobar$ over $F$ associated to the $\mod p$ relation
	$$\sum_{j=1}^{p\alpha}F_j-\sum_{j={p\alpha+1}}^{p\alpha+p\beta}F_j.$$
	
	We have the following topological invariants for $\xtilde$;
	\begin{enumerate}
		\item $\chi(\xtilde)=p(3+N-2p(\alpha+\beta))+2p(\alpha+\beta)$.
		\item $b_1(\xtilde)=0$.
		\item $b_2^+(r)(\xtilde)=2+2\alpha\beta r(p-r)-p(\alpha+\beta)$ for $r\neq 0$.
		\item $b_2^-(r)(\xtilde)=1+N-2\alpha\beta r(p-r)-p(\alpha+\beta)$ for $r\neq 0$.
	\end{enumerate}
\end{proposition}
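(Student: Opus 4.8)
The plan is to read each invariant off the general machinery of Sections~\ref{S:relations}--\ref{S:relations}, with Proposition~\ref{prop:Babprels} as the key combinatorial input. Item~(1) is immediate: $X=\cptwo\conn N\cptwobar$ has $\chi(X)=3+N$, the branch set $F$ has $p(\alpha+\beta)$ components, and Corollary~\ref{C:branched}\,(4) gives $\chi(\xtilde)=p(\chi(X)-2|F|)+2|F|$. For item~(2), first note that the relation $(1,\dots,1,-1,\dots,-1)$ has every coefficient nonzero mod $p$ (since $p\geq 2$), so by Proposition~\ref{P:covers} it defines a nontrivial $\zp$-cover and hence $H^1(X-F;\zp)\neq 0$. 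On the other hand, under the hypothesis on the blown-up points the proper transforms $F_i$ of the lines of $\B^p_{\alpha,\beta}$ have exactly the mod $p$ relations of Proposition~\ref{prop:Babprels}, i.e.\ only multiples of $(1,\dots,1,-1,\dots,-1)$; in particular no relation is linearly independent from it, so Corollary~\ref{C:cyclic} forces $\dim_{\zp}H^1(X-F;\zp)=1$, i.e.\ $\dim_{\zp}H_1(V;\zp)=1$ with $V=X-F$. Then Corollary~\ref{C:branched}\,(3) gives $b_1(r)(\xtilde)=0$ for $r\neq 0$, while Corollary~\ref{C:branched}\,(1) gives $b_1(0)(\xtilde)=b_1(X)=0$, and summing over eigenspaces yields $b_1(\xtilde)=0$.

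Next I would compute the equivariant signature $\varepsilon_r(\xtilde)=b_2^+(r)-b_2^-(r)$ for $r\neq 0$ from Theorem~\ref{thm:epsilonsignature}. The weights of the relation are $\pm1$, so the covering generator rotates the normal bundle of each component of $\widetilde F$ by $\pm2\pi/p$, and the hypothesis of that theorem holds. It remains to compute $[F]^2=\sum_i[F_i]^2$ (the $F_i$ are disjoint). A line of $\B^p_{\alpha,\beta}$ through $P_1$ passes through exactly $1+p\beta$ of the blown-up points — namely $P_1$ and the $p\beta$ double points on it, and no others by the hypothesis that the blown-up points on $\B^p_{\alpha,\beta}$ are precisely its line-intersections — so its proper transform has self-intersection $1-(1+p\beta)=-p\beta$; similarly a line through $P_2$ yields self-intersection $-p\alpha$. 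Hence $[F]^2=p\alpha(-p\beta)+p\beta(-p\alpha)=-2p^2\alpha\beta$. With $\sign(X)=1-N$, Theorem~\ref{thm:epsilonsignature} gives
\[
\varepsilon_r(\xtilde)=(1-N)-\tfrac{2r(p-r)}{p^2}\bigl(-2p^2\alpha\beta\bigr)=(1-N)+4\alpha\beta\,r(p-r).
\]

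To get the total rank $b_2(r)(\xtilde)=b_2^+(r)+b_2^-(r)$ I would pass through the eigenspace Euler characteristic. Since $\xtilde$ is a closed connected oriented $4$-manifold on which $G$ acts preserving orientation, $b_0(r)(\xtilde)=b_4(r)(\xtilde)=0$ for $r\neq 0$; also $b_1(r)(\xtilde)=0$ by item~(2) and $b_3(r)(\xtilde)=0$ by Poincar\'e duality (as $b_1(\xtilde)=0$). Hence $\chi_r(\xtilde)=b_2(r)(\xtilde)$. By Corollary~\ref{C:branched}\,(2) the $\omega^r$-eigenspaces of $H_*(\xtilde)$ and $H_*(\vtilde)$ agree for $r\neq 0$, so $\chi_r(\xtilde)=\chi_r(\vtilde)=\chi(V)$ by Proposition~\ref{P:unbranched}\,(2). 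Finally, writing $X$ as the union of $X-F$ and a regular neighborhood of $F$ glued along a disjoint union of circle bundles over spheres (Euler characteristic $0$), and using $\chi(F)=2p(\alpha+\beta)$, we get $\chi(V)=\chi(X)-\chi(F)=(3+N)-2p(\alpha+\beta)$. Thus $b_2(r)(\xtilde)=3+N-2p(\alpha+\beta)$, and solving
\[
b_2^+(r)+b_2^-(r)=3+N-2p(\alpha+\beta),\qquad b_2^+(r)-b_2^-(r)=(1-N)+4\alpha\beta\,r(p-r)
\]
gives items~(3) and~(4).

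The main obstacle is item~(2): one must check that the mod $p$ relations among the $F_i$ really are exactly those of Proposition~\ref{prop:Babprels} — this is where the hypothesis on the blown-up points is essential, since blowing up extra points outside $\B^p_{\alpha,\beta}$ contributes exceptional classes appearing in no $[F_i]$ and so does not enlarge the relation module — and that the distinguished relation has all coefficients nonzero (so Corollary~\ref{C:cyclic} applies) while $H^1(X-F;\zp)\neq0$ (so its rank is exactly $1$, not $0$). A secondary point of care is the sign in the Casson--Gordon formula when the weights have mixed signs; but $r\mapsto p-r$ symmetry shows a weight $-1$ contributes the same as a weight $+1$, which is exactly what the $\pm2\pi/d$ allowance in Theorem~\ref{thm:epsilonsignature} records. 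Everything else is bookkeeping with Euler characteristics and the displayed linear system.
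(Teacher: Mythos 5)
Your proposal is correct and follows essentially the same route as the paper's proof: item (1) from the Euler characteristic formula for branched covers, item (2) from Proposition~\ref{prop:Babprels} together with Corollary~\ref{C:cyclic} and Corollary~\ref{C:branched}, the eigenspace rank $b_2(r)(\xtilde)=\chi(X-F)$ from the equivariant Euler characteristic, and items (3)--(4) by combining this with the Casson--Gordon signature formula using $[F]^2=-2p^2\alpha\beta$. The only differences are that you make explicit a couple of points the paper leaves implicit (that the distinguished relation has all coefficients nonzero so the rank of $H^1(X-F;\zp)$ is exactly one, and that the $\pm 2\pi/p$ rotation hypothesis of Theorem~\ref{thm:epsilonsignature} is satisfied), which is fine.
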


\begin{proof}
The first item is simply Equation \eqref{eqn:eulerchar} applied in this situation.

To prove item (2), note that by Proposition \ref{prop:Babprels} the only non-trivial mod $p$ relations of $\F$ are multiples of $(1,\cdots, 1, -1, \cdots, -1)$. Therefore by Corollary \ref{C:cyclic}, $H^1(X-F;\Z_p)$ has rank one. By items (1) and (3) of Corollary~\ref{C:branched}, $H_1(\xtilde;\C)^{(r)}$ vanishes for all $r$.  Poincar\'e duality implies the vanishing of $H_3(\xtilde;\C)^{(r)}$ as well. Equivalently, $b_1(r)(\xtilde)=b_3(r)(\xtilde)=0$ for all $r=0,\cdots, p-1$.

Since $\xtilde$ is a connected 4-manifold, $H_0(\xtilde)\cong H_4(\xtilde)\cong \Z$, and the $\Z/p$ action on these groups is trivial. So $b_0(0)(\xtilde)=b_4(0)(\xtilde)=1$ and $b_0(r)(\xtilde)=b_4(r)(\xtilde)=0$ for $r\neq 0$. It follows that $\chi_r(\xtilde)=b_2(r)(\xtilde)$ for $r\neq 0$.  On the other hand, item (2) of Corollary~\ref{C:branched} implies that $\chi_r(\xtilde)=\chi(X-F)$ for $r \neq 0$. Putting these together, we see that
$$
b_2(r)(\xtilde)=\chi(X-F)=3+N-2p(\alpha+\beta)
$$
 for $r\neq 0$.
 
Next we calculate $\varepsilon_r(\xtilde)$ using theorem \ref{thm:epsilonsignature}. For this, we need to know $[F]^2$. For each of the first $p\alpha$ lines in $X$ we have blown up at $p\beta+1$ points, so each of their proper transforms has self-intersection $-p\beta$. Similarly the next $p\beta$ proper transforms have self-intersection $-p\alpha$. In total $[F]^2=-2p^2\alpha\beta$. Therefore by theorem \ref{thm:epsilonsignature}
$$\varepsilon_r(\xtilde)=1-N+4\alpha\beta r(p-r).$$
Solving the equations $b_2(r)=b_2^+(r)+b_2^-(r)$ and $\varepsilon_r=b_2^+(r)-b_2^-(r)$ yields the resulting formula.
\end{proof}

We will use this information to obstruct realizations of certain line arrangements, by showing that the proper transforms of the other lines in the arrangement represent a subspace of the negative definite piece of $H_2(\xtilde)$ (or the $\omega^r$ eigenspace $H_2(\xtilde;\C)^{(r)}$) of rank larger than is allowed by the calculation of $b_2^-$ (or $b_2^-(r)$) above.

\section{Non-realizable line arrangements}

Here we demonstrate how to use the branch covering invariants to obstruct $\C$-realizations of line arrangements. We provide two types of examples which both use a $\B_{\alpha,\beta}^p$ sub-arrangement to produce the branching set, but differ in how the number of blow-ups is chosen. Specifically, with the finite projective planes we will blow-up at all intersection points in the configuration causing the proper transforms to be disjoint and thus obviously independent generators of second homology. In the second example, the $14_4$ configuration, we will only blow-up at the points at the intersections in the $\B_{\alpha,\beta}^p$ sub-arrangement. This gives us a branched covering with lower values of $b_2^-(r)$ but we need to check by hand that the intersection form restricted to the subspace spanned by the spheres outside the branch set has full rank. Note that the differences in these two examples are essential for obtaining obstructions to their realizations. In any given example, one should consider the various options of whether and how much to blow-up away from the branch set to optimize the chances of finding an obstruction to realization.

\subsection{Finite projective planes}
\begin{proof}[Proof of Theorem \ref{T:projplane}]
Recall that by the comment at the beginning of section \ref{sec:examples}, to obstruct realizability of the $P(2,p^q)$ combinatorial line arrangements it suffices to obstruct realizability of $P(2,p)$.	

Assume that we have a topological $\C$-realization of the combinatorial line arrangement $P(2,p)$ in $\cptwo$. Let $X=\cptwo\conn(p^2+p+1)\cptwobar$ where the blow-ups are performed precisely at the intersections of the lines in the arrangement. As noted above, $\B_{1,1}^p$ is a strict sub-arrangement of $P(2,p)$. Therefore we can construct a $p$-fold branched cover $\xtilde$ as in Proposition \ref{prop:bettiBpab} with $N=p^2+p+1$. 

The Euler characteristic of $\xtilde$ is
$$\chi(\xtilde) = p(p^2+p+4-2(2p))+2(2p)=p(p^2-3p+8)$$
and
$$b_2(\xtilde)=p(p^2-3p+8)-2.$$

On the other hand, there are $p^2-p+1$ lines in $P(2,p)$ which are not part of $\B_{1,1}^p$ and these become disjoint from the branch set and from each other in $X$. Each line contains $p+1$ points which are blown-up so the proper transforms have square $-p$. Thus there are $p(p^2-p+1)$ disjoint $(-p)$-spheres and $2p$ disjoint $(-1)$-spheres in the cover $\xtilde$. Since these are disjoint spheres of non-zero self-intersection, the intersection form restricted to their span is non-degenerate of rank $p(p^2-p+1)+2p=p(p^2-p+3)$. Therefore $b_2(\xtilde)\geq p(p^2-p+3)$.

We find a contradiction here when $p>2$ because $$b_2(\xtilde)=p(p^2-3p+8)-2<p(p^2-p+3).$$

When $p=2$, $b_2(\xtilde)=10$, and our assumptions imply $\xtilde$ contains $6$ disjoint $(-2)$-spheres and $4$ disjoint $(-1)$-spheres for a total of $10$ independent generators of $H_2(\xtilde)$. Here we can rule out this possibility using the signature. In particular by Proposition \ref{prop:bettiBpab} $b_2^-(1)=2$ and $b_2^-(0)=b_2^-(X)=7$.

Therefore $b_2^- = 9$, so we cannot have $10$ linearly independent generators of the negative definite subspace of $H_2(\xtilde)$. Hence we find a contradiction to the existence of a topological $P(2,p)$ arrangement in $\cptwo$.

\end{proof}

In fact we can strengthen this theorem further using the full power of proposition \ref{prop:bettiBpab}.
\begin{theorem}
	Let $\A$ be a combinatorial line arrangement obtained from $P(2,p)$ for $p>2$ by deleting fewer than $(p^2-3)/2$ lines which are disjoint from a fixed $\B_{1,1}^p$ sub-arrangement. Then $\A$ is not topologically $\C$-realizable.
\end{theorem}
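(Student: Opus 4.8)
The plan is to repeat the argument proving Theorem~\ref{T:projplane} almost verbatim, but to replace the comparison of \emph{total} Betti numbers of the branched cover by a comparison carried out inside a single eigenspace of the $\Z/p$--action; this is what upgrades the bound from one of order $p$ to one of order $p^2$.

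First I would set up the cover. Suppose toward a contradiction that $\A$ has a topological $\C$-realization, where $\A$ is obtained from the combinatorics of $P(2,p)$ by deleting $m<(p^2-3)/2$ of the $p^2-p+1$ lines that are \emph{not} in a fixed sub-arrangement $\B_{1,1}^p$. Exactly as in the proof of Theorem~\ref{T:projplane}, blow up at all $p^2+p+1$ points of $P(2,p)$ to form $X=\cptwo\conn(p^2+p+1)\cptwobar$. Since $\B_{1,1}^p$ is a strict sub-arrangement of $P(2,p)$, the blow-up points lying on lines of $\B_{1,1}^p$ are precisely its $p^2+2$ points, so Proposition~\ref{prop:bettiBpab} applies with $\alpha=\beta=1$ and $N=p^2+p+1$. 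Let $F$ be the union of the proper transforms of the $2p$ lines of $\B_{1,1}^p$ and let $\xtilde\to X$ be the associated $p$-fold branched cover. Because $X$, $F$, and the defining mod $p$ relation involve only $\B_{1,1}^p$ and not the deleted lines, $\xtilde$ is the very same $4$-manifold that appears in the proof of Theorem~\ref{T:projplane}; in particular $b_1(\xtilde)=0$ and, for $r\neq 0$,
$$b_2^-(r)(\xtilde)=p^2-p+2-2r(p-r).$$

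Second I would produce many disjoint negative spheres inside one eigenspace. Each of the $p^2-p+1-m$ surviving lines $L$ of $\A$ outside $\B_{1,1}^p$ meets every other line of the configuration at a point of $P(2,p)$, all of which have been blown up; hence its proper transform is disjoint from $F$ and from the proper transforms of the other surviving lines, and, passing through exactly $p+1$ blow-up points, it has self-intersection $-p$. Being disjoint from the branch set, $L$ lifts to $p$ mutually disjoint $(-p)$-spheres in $\xtilde$ cyclically permuted by the deck action, and their span meets each eigenspace $H_2(\xtilde;\C)^{(r)}$, $r\neq 0$, in a line $\C v_L^{(r)}$ with $v_L^{(r)}\cdot\overline{v_L^{(r)}}=-p^2$. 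For distinct such lines $L\neq L'$ the classes $v_L^{(r)}$ and $v_{L'}^{(r)}$ are orthogonal, since all the lifts involved are disjoint. Thus these $p^2-p+1-m$ classes span a negative-definite subspace of $H_2(\xtilde;\C)^{(r)}$, so that $b_2^-(r)(\xtilde)\geq p^2-p+1-m$.

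Finally, choosing $r=\frac{p-1}{2}$ — an integer, since $p$ is an odd prime — maximizes $r(p-r)=\frac{p^2-1}{4}$, and the two displayed inequalities combine to
$$p^2-p+1-m\ \leq\ b_2^-(r)(\xtilde)\ =\ \frac{p^2-2p+5}{2},$$
which rearranges to $m\geq(p^2-3)/2$, contradicting the hypothesis. The only delicate point — and it is essentially the same one already handled in the proof of Theorem~\ref{T:projplane} — is to check that, after blowing up at all $p^2+p+1$ points, the surviving lines outside $\B_{1,1}^p$ really do remain disjoint $(-p)$-spheres disjoint from $F$ in spite of the deletions; but since the deletions change only which lines survive, and not $X$, $F$, or $\xtilde$, this requires no new work. (Comparing the total $b_2$, as in the published proof of Theorem~\ref{T:projplane}, would yield only a bound on $m$ of order $p$; the improvement to $(p^2-3)/2$ comes entirely from working one eigenspace at a time and optimizing over $r$.)
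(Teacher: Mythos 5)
Your argument is correct and is essentially the paper's own proof: the same blow-up at $p^2+p+1$ points, the same $p$-fold branched cover over the $\B_{1,1}^p$ proper transforms, the same eigenvector combinations of the $p$ lifts of each surviving $(-p)$-sphere giving classes of square $-p^2$ in $H_2(\xtilde;\C)^{(r)}$, and the same optimization at $r=(p-1)/2$ against the $b_2^-(r)$ formula of Proposition~\ref{prop:bettiBpab}. The only cosmetic difference is that the paper phrases the blow-up locus as ``$p^2+p+1$ points chosen so that each line of $\A$ carries exactly $p+1$ of them'' (since after deletion some points of $P(2,p)$ are no longer intersection points of the realization), which is the precise version of your ``blow up at all points of $P(2,p)$.''
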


\begin{proof}
	Suppose we had a realization of $\A$ in $\cptwo$. Blow-up at $p^2+p+1$ points including each intersection point of $\A$ such that exactly $p+1$ points are blown up on each line of $\A$. This is possible because $\A$ is a sub-arrangement of $P(2,p)$. Take a branch cover over the proper transforms of the lines in the fixed $\B_{1,1}^p$ sub-arrangement.
	
	Then there are ($|\A_\Lc|-2p$) other $(-p)$-spheres which are not part of the branch set. Each of these will have $p$ lifts $S_0,\cdots, S_{p-1}$ which are cyclically permuted by the $\Z_p$ action. Therefore, for each of these spheres, the combination $S_0+\omega^{(p-1)r}S_1+\cdots+\omega^{2r}S_{p-2}+\omega^r S_{p-1}$ is in the $\omega^r$-eigenspace of the generator of the $G$-action ($r\neq0$). Such an element has square
	\begin{multline*}
	[S_0+\omega^{(p-1)r}S_1+\cdots+\omega^{2r}S_{p-2}+\omega^r S_{p-1}] \Bigcdot\\
	\overline{[S_0+\omega^{(p-1)r}S_1+\cdots+\omega^{2r}S_{p-2}+\omega^r S_{p-1}]}
	=-p^2.
\end{multline*}
		In particular these elements have negative square. Since the ($|\A_\Lc|-2p$) spheres downstairs are all disjoint from each other, these lifts are all independent of each other in $H_2(\xtilde;\C)^{(r)}$. Therefore $b_2^-(r)\geq |\A_\Lc|-2p$ for all $r\neq 0$.
	
	On the other hand proposition \ref{prop:bettiBpab} tells us $b_2^-(r)(\xtilde)=1+(p^2+p+1)-2r(p-r)-2p$ for $r\neq 0$. This is minimized when $r=(p-1)/2$:
	$$b_2^-((p-1)/2)=(p^2+p+1)-2p-\frac{p^2-3}{2}.$$
	
	Thus if we have deleted less than $(p^2-3)/2$ lines from $P(2,p)$ to obtain $\A$, $|\A_\Lc|-2p>(p^2+p+1)-2p-(p^2-3)/2=b_2^-(r)(\xtilde)$ and we obtain a contradiction.
\end{proof}

\subsection{The $14_4$ configuration}

As mentioned in the introduction, there is a unique combinatorial $14_4$ configuration of $14$ lines. There are $14$ distinguished points of multiplicity $4$ so that each line contains exactly $4$ of these. A given line intersects each of the other $13$ lines once, and the four quadruple points on a given line account for the intersection of that line with $12$ other lines. Therefore there is exactly one additional intersection point on each line and it is a double point. We show here that such an arrangement is not topologically $\C$-realizable.

\begin{proof}[Proof of Theorem \ref{T:144config}]
	First, notice that $\B_{2,2}^2$ is a $2$-relation that is a sub-arrangement of the $(14_4)$ arrangement by the following argument. Choose one of the quadruple points, and include the four lines which pass through it. To find the $\B_{2,2}^2$ sub-arrangement, it suffices to find another quadruple point in the configuration which does not lie on any of these four lines. Each line contains four quadruple points, so there are $13$ quadruple points in total which lie on these four lines. Thus there is exactly one quadruple point which is not on any of these lines so $\B_{2,2}^2$ is a sub-arrangement. Moreover, every line in the $(14_4)$ arrangement contains five intersection points, and every line in the $\B_{2,2}^2$ arrangement contains five intersection points (four double points and one quadruple point). Therefore $\B_{2,2}^2$ is a strict sub-arrangement of the $(14_4)$ arrangement.
	
	There are six lines and three points in $14_4$ which are not in $\B_{2,2}^2$. Suppose we could realize the $14_4$ arrangement topologically in $\cptwo$. Then blow-up at only the $18$ points in the $\B_{2,2}^2$ sub-arrangement. Let $S_1,\cdots, S_6$ be the proper transforms of the lines which are not in the $\B_{2,2}^2$ sub-arrangement. Because $\B_{2,2}^2$ was a strict sub-arrangement, these spheres are disjoint from the proper transforms of the $\B_{2,2}^2$ spheres which will be our branch set. Then intersection form restricted to their span in that basis is
	$$\oplus_3 \left[ \begin{array}{cc} -3 & 1 \\ 1 & -3 \end{array} \right]$$
	which is non-degenerate. Take the $2$-fold cover $\xtilde$ as above. Then each of the spheres $S_i$ has two lifts $S_i^0$ and $S_i^1$ which are interchanged under the deck transformation. The classes $[S_i^0]-[S_i^1]$ are in the $-1$-eigenspace. The intersection form on $([S_1^0]-[S_1^1],\cdots, [S_6^0]-[S_6^1])$ is
	$$\oplus_3 \left[ \begin{array}{cc} -6 & 2 \\ 2 & -6 \end{array} \right]$$
	which is still non-degenerate and thus has rank 6. Therefore $b_2^-(1)\geq 6$. On the other hand, applying proposition \ref{prop:bettiBpab} shows that $b_2^-(1)=1+18-8-8=3$ which is a contradiction.
	
\end{proof}

\section{Complexifying real pseudolines}
A geometric real line arrangement is specified by a collection of real linear equations; the solutions to these in $\cptwo$ form a complex line arrangement with the same combinatorics.  The passage from the real to complex arrangement is called {\em complexification}.  While an \rpl\ arrangement is specified by a collection of non-linear equations, passing to the complex solutions of these is unlikely to yield a \cpl\ arrangement, let alone one with the same combinatorics.  So it is a reasonable question to ask if there is a more topological complexification process; in this section we prove Theorem~\ref{T:real-complex} and show that indeed there is. Additionally, we prove Theorem~\ref{T:real-symplectic}, showing that after isotoping the \rpl\ arrangement, our complexification process will yield a symplectic line arrangement.

\subsection{The smooth construction}
The proof of Theorem~\ref{T:real-complex} is based on a well-known decomposition of $\cptwo$ as the union of tubular neighborhoods of the standard copy of $\rptwo$ and a neighborhood of a smooth conic $C$ having no real points.  For concreteness, we take $C$ to be the Fermat curve given by $z_0^2 + z_1^2 + z_2^2 =0$.  Since $C$ is defined over the reals, this decomposition may be taken to be invariant under complex conjugation. This decomposition is described nicely in~\cite{gilmer:real}, and we will make use of several formulas from that paper. 

Observe that there is an orientation reversing diffeomorphism between $T\rptwo$ and the normal bundle $\nu_{\rptwo}$ in $\cptwo$. This diffeomorphism is given by multiplication by $i$. Equivalently, there is an orientation preserving diffeomorphism between $T^*\rptwo$ and $\nu_{\rptwo}$. This also follows from the Weinstein neighborhood theorem because $\rptwo$ is Lagrangian in $\cptwo$ with the standard symplectic structure.

Gilmer~\cite[Remark 4.2]{gilmer:real} gives an explicit map $\Psi$ from $\bar{E}$, the unit tangent disk bundle of $\rptwo$, to $\cptwo$. 
He shows that is a $\Psi$ is diffeomorphism from the interior of the bundle to $\cptwo - C$, and $\Psi: \partial \bar{E} \to C$ is an oriented circle bundle, readily identified with the unit normal bundle of $C$ in $\cptwo$. From this picture, we can see the complexification of an actual line $\ell$ in $\rptwo$. Parameterize $\ell$ as the quotient of a unit-speed curve $\gamma(t) \subset S^2 \subset \R^3$. Then the complexification $\ell_\C$ is parameterized as
\begin{equation}\label{E:LC}
\left\{\Psi(\gamma(t),s \gamma'(t)),\ s \in [-1,1],\ \text{and}\ t\in [0,2\pi]\right\}.
\end{equation}
Note that the points with $s = \pm 1$ represent two distinct intersections of $\ell_\C$ with $C$ that are interchanged by complex conjugation.

We may generalize this to non-straight curves in $\rptwo\subset \cptwo$. Gilmer describes how any such curve (or collection of curves) gives rise to a link in the boundary of a neighborhood of $\rptwo$. Let $\widetilde{Q}$ be the unit circle bundle in $\nu_{\rptwo}$ (identified with the unit co-tangent bundle). The {\em lift} of a curve $\Gamma$ in $\rptwo$ is the link $L(\Gamma)\subset \widetilde{Q}$ given by
$$L(\Gamma):=\{(p,v)\in \widetilde{Q}\mid p\in \Gamma, v^* \in T_p\Gamma, ||v||=1\}.$$
Note that there are two points in each unit cotangent fiber over each $p\in \Gamma$.

We make the following basic but important observation which follows immediately from the definition.

\begin{lemma}
	Suppose $\Gamma_1$ and $\Gamma_2$ are curves in $\rptwo$ intersecting transversally. Then $L(\Gamma_1)$ and $L(\Gamma_2)$ are disjointly embedded links in $\widetilde{Q}$.
\end{lemma}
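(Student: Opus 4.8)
The plan is to argue directly from the definition of the lift, with transversality doing all the work. Fix once and for all a Riemannian metric on $\rptwo$, so that the unit cotangent bundle $\widetilde{Q}$ and the condition $\|v\|=1$ are unambiguous, and so that the musical isomorphism $v\mapsto v^*$ between $T^*_p\rptwo$ and $T_p\rptwo$ is available; this is consistent with the identification of $\widetilde{Q}$ used earlier in the section. Throughout, the $\Gamma_i$ are embedded curves (as pseudolines are), so that the lifts are genuine links.

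First I would check that each $L(\Gamma_i)$ is an embedded link, which is really just the parenthetical observation preceding the lemma: over each $p\in\Gamma_i$ the fiber of $L(\Gamma_i)$ consists of the two unit covectors $v$ with $v^*$ spanning the tangent line $T_p\Gamma_i$, and since $\Gamma_i$ is embedded, the assignment $p\mapsto$ these two points is a smooth $2$-to-$1$ embedding of $\Gamma_i$ into $\widetilde{Q}$; thus $L(\Gamma_i)$ is a disjoint union of embedded circles.

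The heart of the argument is disjointness. Suppose for contradiction that $(p,v)\in L(\Gamma_1)\cap L(\Gamma_2)$. Then $p\in\Gamma_1\cap\Gamma_2$, and by definition $v^*\in T_p\Gamma_1$ and $v^*\in T_p\Gamma_2$. Since $\Gamma_1$ and $\Gamma_2$ meet transversally at $p$, the tangent lines $T_p\Gamma_1$ and $T_p\Gamma_2$ are distinct $1$-dimensional subspaces of the $2$-dimensional space $T_p\rptwo$, so $T_p\Gamma_1\cap T_p\Gamma_2=\{0\}$. Hence $v^*=0$, contradicting $\|v\|=\|v^*\|=1$. Therefore $L(\Gamma_1)\cap L(\Gamma_2)=\emptyset$, and combined with the previous paragraph this shows $L(\Gamma_1)$ and $L(\Gamma_2)$ are disjointly embedded links.

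There is no substantive obstacle here; the only thing requiring care is to keep the metric and duality conventions aligned with those used for $\widetilde{Q}$ elsewhere, and to record the (elementary) fact that transversality of two curves in a surface means precisely that they have distinct tangent lines at each common point, which is exactly what forces the contradiction.
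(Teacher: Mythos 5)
Your argument is correct and is exactly the one the paper has in mind: the paper states this lemma as following ``immediately from the definition,'' and the content of that immediacy is precisely your observation that a common point $(p,v)$ of the two lifts would force $v^*\in T_p\Gamma_1\cap T_p\Gamma_2=\{0\}$ by transversality, contradicting $\|v\|=1$. Nothing further is needed.
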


We will say that a family of immersed curves in $\rptwo$, $\{\Gamma_j^t\}$ for $t\in [0,1]$ and $j\in \{1,\cdots, n\}$ is a \emph{transverse regular homotopy} if $\Gamma_i^r$ and $\Gamma_j^r$ have only transverse intersections for each $r\in[0,1]$ and $i\neq j$.  (In all of the regular homotopies considered in this paper, the individual curves will remain embedded.)  A useful version of the previous lemma in this context is the following.

\begin{lemma}
	Let $\{\Gamma^t_j\}^{t\in [0,1]}_{j\in \{1,\cdots, n\}}$ be a transverse regular homotopy of curves in $\rptwo$. Then $L(\Gamma_1^t)\cup\cdots \cup L(\Gamma_n^t)$ is an embedded link for all $t\in [0,1]$. In other words, a transverse regular homotopy of curves in $\rptwo$ lifts to an embedded isotopy of links in $\widetilde{Q}$.
\end{lemma}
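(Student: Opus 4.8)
\emph{Proof proposal.} The plan is to deduce the statement from the preceding lemma applied at each fixed time, together with a routine smooth-dependence argument in $t$ and the isotopy extension theorem.

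First I would fix $t\in[0,1]$ and check that $L(\Gamma_1^t)\cup\cdots\cup L(\Gamma_n^t)$ is an embedded link. Since each $\Gamma_j^t$ is embedded, its lift $L(\Gamma_j^t)$ is an embedded $1$-manifold in $\widetilde Q$: the lift assigns to $p\in\Gamma_j^t$ the two unit covectors at $p$ dual (via the metric on $\rptwo$) to the two unit tangent vectors of $\Gamma_j^t$ at $p$, so $L(\Gamma_j^t)\to\widetilde Q$ is an injective immersion of the orientation double cover of $\Gamma_j^t$, a compact $1$-manifold, hence an embedding. Disjointness of $L(\Gamma_i^t)$ and $L(\Gamma_j^t)$ for $i\neq j$ is exactly the content of the preceding lemma applied to the pair $(\Gamma_i^t,\Gamma_j^t)$, which meet transversally by the definition of a transverse regular homotopy. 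Thus $L(\Gamma^t):=\bigcup_j L(\Gamma_j^t)$ is an embedded link for every $t$.

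Next I would promote this to an isotopy. Choose smooth parametrizations $f_j\colon M_j\times[0,1]\to\rptwo$ of the homotopy, where $M_j\cong\rpone$ and $f_j(\cdot,t)$ parametrizes $\Gamma_j^t$; because each $f_j(\cdot,t)$ is an immersion, the unit tangent direction $(x,t)\mapsto \partial_x f_j(x,t)/\|\partial_x f_j(x,t)\|$, and hence its metric dual, depends smoothly on $(x,t)$. Letting $\widetilde M_j\to M_j$ be the orientation double cover, this produces a smooth map $H\colon \bigl(\bigsqcup_j \widetilde M_j\bigr)\times[0,1]\to\widetilde Q$ whose time-$t$ restriction $H_t$ has image $L(\Gamma^t)$; by the first step each $H_t$ is an embedding, so $H$ is an isotopy of links, and since $[0,1]$ is compact the isotopy extension theorem upgrades it to an ambient isotopy of $\widetilde Q$.

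The only point requiring any care is the first step, namely that passing from an embedded curve to its lift creates no self-intersections and no failure of immersivity. Self-intersections are impossible because the lift remembers the basepoint $p\in\rptwo$ and distinct basepoints have disjoint fibers in $\widetilde Q$; the immersion property holds because an embedded curve has a smooth nonvanishing unit tangent field. Granting this, the transversality hypothesis feeds directly into the preceding lemma, so I do not expect a genuine obstacle here — the work is purely the bookkeeping of arranging the double-cover structure and the tangent-direction assignment to vary smoothly with $t$, which is automatic for a regular homotopy through embedded curves.
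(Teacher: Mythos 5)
Your proposal is correct and follows the same route the paper intends: the paper states this lemma without proof as an immediate consequence of the preceding disjointness lemma applied at each fixed time $t$, combined with the evident smooth dependence of the lifts on $t$. Your additional care about the lift of a single embedded curve being an embedded double cover, and the final appeal to isotopy extension, are fine elaborations of that same argument rather than a different approach.
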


Tracing out this isotopy as we vary the radius of the circle fibers in the cotangent bundle, we can build a surface analogous to the complexification as in equation \eqref{E:LC}. For each  $r\in (0,1)$, the circle bundle of radius $r$ is diffeomorphic to $\widetilde{Q}$ and there is a corresponding link $L_r(\Gamma^r_j)$ lifting each curve at the time in the isotopy corresponding to the radial coordinate. (We will assume the isotopy is the identity in a neighborhood of the end-points $0$ and $1$.) The union of these links over all $r$ together with the core curve $C$ form an annulus in the disk bundle:
$$A(\Gamma^t):=\{(p,v)| p\in \Gamma^r, v^*\in T_p\Gamma^r, ||v||=r, r\in [0,1) \}.$$

To cap off these annuli to spheres in the class of the complex projective line, we would want the link
$$L(\Gamma^1)=\{(p,v)| p\in \Gamma^1, v^*\in T_p\Gamma^1, ||v||=1 \}$$
to be a collection of circle fibers over the conic. However, if the curves $\Gamma^r$ are straight lines in $\rptwo$ for $r>1-\varepsilon$ for some $\varepsilon>0$, then $A(\Gamma^t)$ agrees with the complexification of a straight line (as in equation \eqref{E:LC}) sufficiently close to the conic. Therefore, we can smoothly cap off with disks attached to the two ends of $A(\Gamma^t)$. 

Denote the resulting spheres by $S(\Gamma^t)$. We note that in this case, the disks are the normal fibers to the conic $C$ which are complex and thus symplectic disks.

\begin{lemma}
	Suppose $\Gamma_1^t$ and $\Gamma_2^t$ are homotopies of curves in $\rptwo$ such that $\Gamma_1^r$ and $\Gamma_2^r$ intersect transversally for all $r\in [0,1]$. Then $A(\Gamma_1^t)$ and $A(\Gamma_2^t)$ intersect transversally precisely at the intersection points of $\Gamma_1^0$ and $\Gamma_2^0$ in $\rptwo$.
\end{lemma}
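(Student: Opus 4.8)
\noindent The plan is to show first that the two annuli can meet only along the zero section, over $\Gamma_1^0\cap\Gamma_2^0$, and then that there they look like the complexifications of two transverse straight lines, so the intersections are transverse. For the first point: if $(p,v)\in A(\Gamma_1^t)\cap A(\Gamma_2^t)$, put $r=\|v\|$; then $p\in\Gamma_1^r\cap\Gamma_2^r$ and $v^*\in T_p\Gamma_1^r\cap T_p\Gamma_2^r$. Since $\Gamma_1^r$ and $\Gamma_2^r$ meet transversally at $p$, the tangent lines $T_p\Gamma_1^r$ and $T_p\Gamma_2^r$ are distinct inside the $2$-plane $T_p\rptwo$, so $v^*=0$; hence $v=0$, $r=0$, and $p\in\Gamma_1^0\cap\Gamma_2^0$. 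Conversely every $(p,0)$ with $p\in\Gamma_1^0\cap\Gamma_2^0$ lies in both annuli, the tangency condition being vacuous when $v=0$. So $A(\Gamma_1^t)\cap A(\Gamma_2^t)$ is exactly $\Gamma_1^0\cap\Gamma_2^0$, regarded in the zero section $\rptwo\subset\nu_{\rptwo}\subset\cptwo$; in particular it is a finite set of points.

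For transversality at a point $p_0\in\Gamma_1^0\cap\Gamma_2^0$, I would use the running assumption that each homotopy is constant for $r$ near $0$: if $\Gamma_i^r=\Gamma_i^0$ for $r<\delta$, then on the neighborhood $\{\|v\|<\delta\}$ of the zero section the annulus $A(\Gamma_i^t)$ coincides with
$$A_0(\Gamma_i^0):=\{(p,v)\in\nu_{\rptwo}\mid p\in\Gamma_i^0,\ v^*\in T_p\Gamma_i^0\},$$
the total space of the line sub-bundle of $\nu_{\rptwo}|_{\Gamma_i^0}$ with fibre $\{v:v^*\in T_p\Gamma_i^0\}$ over $p$, which is a smooth embedded surface near $(p_0,0)$. (Without this assumption $A(\Gamma_i^t)$ could have a corner along $\Gamma_i^0$, where transversality would be undefined.) At the zero-section point $(p_0,0)$ there is the canonical splitting $T_{(p_0,0)}\nu_{\rptwo}=T_{p_0}\rptwo\oplus T^*_{p_0}\rptwo$ into horizontal and vertical parts, and
$$T_{(p_0,0)}A_0(\Gamma_i^0)=T_{p_0}\Gamma_i^0\oplus\{v\in T^*_{p_0}\rptwo:v^*\in T_{p_0}\Gamma_i^0\}.$$
Transversality of $\Gamma_1^0,\Gamma_2^0$ makes the horizontal summands span $T_{p_0}\rptwo$; the vertical summands are the preimages of the distinct lines $T_{p_0}\Gamma_1^0,T_{p_0}\Gamma_2^0$ under the fibrewise isomorphism $v\mapsto v^*$, hence are distinct and span $T^*_{p_0}\rptwo$. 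So the two tangent planes together span $T_{p_0}\cptwo$ and meet only in $0$, i.e.\ $A(\Gamma_1^t)$ and $A(\Gamma_2^t)$ are transverse at $p_0$; carrying this out at each of the finitely many points of $\Gamma_1^0\cap\Gamma_2^0$ finishes the proof. (When the $\Gamma_i^0$ are honest lines, $A_0(\Gamma_i^0)$ is the complexification \eqref{E:LC}, a complex line, and this reduces to the familiar fact that two distinct complex lines meet transversally in a single point.)

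The argument is entirely local and elementary. The only points that need care are the two flagged above: one must invoke the standing hypothesis that the homotopies are constant near $r=0$, so that the annuli are smooth wherever they actually intersect, and one must read off the tangent planes of $A_0(\Gamma_i^0)$ at the zero section correctly, keeping the horizontal/vertical splitting and the metric identification $v\mapsto v^*$ straight.
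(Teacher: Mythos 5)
Your proof is correct and follows essentially the same route as the paper's: first using transversality of $\Gamma_1^r$ and $\Gamma_2^r$ to force any common point of the annuli to have $v=0$, then spanning $T_{(p,0)}\nu_{\rptwo}$ by the two tangent directions and their (co)normal counterparts. Your extra remark about the homotopy being constant near $r=0$ matches the paper's standing assumption that the isotopy is the identity near the endpoints, so nothing is missing.
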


\begin{proof}
	Suppose $(p,v)\in A(\Gamma_1^t)\cap A(\Gamma_2^t)$ and $||v||=r$. Then because $\Gamma_1^r$ and $\Gamma_2^r$ intersect transversally, $L(\Gamma_1^r)$ and $L(\Gamma_2^r)$ are disjoint in each circle bundle of fixed radius $r>0$, therefore $v=0$ and $p\in \Gamma_1^0\cap \Gamma_2^0$. Now at a point $p\in \Gamma_1^0\cap \Gamma_2^0$, the tangent space to $A(\Gamma_1^t)$ at $(p,0)$ contains the unit tangent vector $v\in T_p\Gamma_1^0$ and $iv \in \nu_p(\rptwo)$. Similarly, the tangent space to $A(\Gamma_2^t)$ at $(p,0)$ contains the unit tangent vector $w\in T_p\Gamma_2^0$ and $iw \in \nu_p(\rptwo)$. Since $\Gamma_1^0$ and $\Gamma_2^0$ intersect transversally, $v$ and $w$ are linearly independent in $T_p(\rptwo)$ therefore $(v,iv,w,iw)$ span $T_{(p,0)}\nu_{\rptwo}$ so $A(\Gamma_1^t)$ and $A(\Gamma_2^t)$ are transverse.
\end{proof}

To gain more explicit control over the pseudolines in $\rptwo$, we will utilize a nice positioning of an \rpl configuration, due to J.~Goodman~\cite{goodman:wiring}. 

A \emph{wiring diagram} with $n$ wires is a configuration of $n$ smooth strands in $[a,b]\times[c,d]$ which are graphical (the intersection of each of the $n$ strands with ${t}\times[c,d]$ is a single point), such that if the strands are labeled $1$ to $n$ from top to bottom along $\{a\}\times[c,d]$ then their corresponding end-points on $\{b\}\times[c,d]$ come in the reverse order: $n$ to $1$ from top to bottom. See figure \ref{fig:wiring}.

Note that while some of the drawings and constructions will be piecewise linear, we will always assume that we have done an arbirarily small rounding of the corners to smooth the curves. The rounding will be done in a neighborhood of the corner disjoint from the other lines, in particular we will always ensure the corners are disjoint from the other pseudolines.

We can identify $\rptwo$ with a compactification of $\R^2$ where a curve becomes a closed loop if its two ends approach infinity at the same slope. Let $\ell$ be the straight line connecting the two endpoints of the wire segment. Extend the wire by $\ell$ outside of the finite box shown in figure \ref{fig:wiring} as in figure \ref{fig:extendedwires}. Because of the reversal of the ordering, a wiring diagram glues up to give  an \rpl configuration, and Goodman~\cite{goodman:wiring} proves the converse: any real pseudoline arrangement is isotopic to a wiring diagram.
\begin{figure}[htb]
	\labellist
	\small\hair 2pt
	\pinlabel {$1$} [ ] at 105 528
	\pinlabel {$2$} [ ] at 105 512
	\pinlabel {$3$} [ ] at 105 496
	\pinlabel {$4$} [ ] at 105 480
	\pinlabel {$5$} [ ] at 105 464
	\pinlabel {$1$} [ ] at 439 464
	\pinlabel {$2$} [ ] at 439 481
	\pinlabel {$3$} [ ] at 439 497
	\pinlabel {$4$} [ ] at 439 514
	\pinlabel {$5$} [ ] at 439 528
	\endlabellist
	\centering
	\includegraphics[scale=.8]{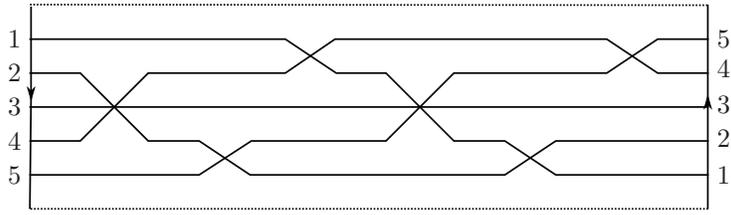}
	\caption{Wiring diagram}
	\label{fig:wiring}
\end{figure}

\begin{figure}
	\centering
	\includegraphics[scale=.6]{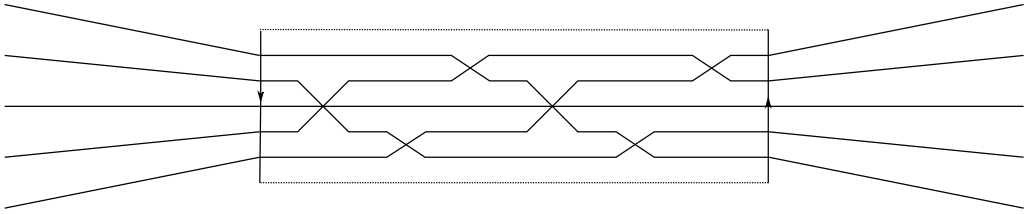}
	\caption{Extension of a wiring diagram to be viewed in an $\R^2$ chart in $\rptwo$ instead of on the Mobius band. The two ends of each straight line extension meet at infinity.}
	\label{fig:extendedwires}
\end{figure}

Now we prove the key remaining piece needed to extend an \rpl\ arrangement $\Gamma_1,\cdots, \Gamma_n$ to a smooth \cpl\ arrangement $S(\Gamma_1^r),\cdots, S(\Gamma_n^r)$. Theorem \ref{T:real-complex} follows.

\begin{proposition}\label{prop:transverse}
	Let $\Gamma_1,\cdots, \Gamma_n$ be a collection of real pseudolines. Then there is a transverse regular homotopy taking $\Gamma_1,\cdots, \Gamma_n$ to a straight line arrangement in $\rptwo$.
\end{proposition}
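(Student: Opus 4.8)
The plan is to first put the configuration into Goodman's wiring‑diagram normal form, and then to deform the wiring diagram by a finite sequence of elementary combinatorial moves, each realized by an explicit transverse regular homotopy, until it becomes an honest straight line arrangement. By Goodman's theorem $\Gamma_1,\dots,\Gamma_n$ are isotopic, through pseudoline arrangements, to a wiring diagram; an isotopy through pseudoline arrangements keeps each strand embedded and keeps the pairwise intersections transverse, so it is in particular a transverse regular homotopy, and we may assume from the outset that $\Gamma_1,\dots,\Gamma_n$ form a wiring diagram in a box $B$, extended outside $B$ by the straight chords through the endpoints of each strand as in Figure~\ref{fig:extendedwires}. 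After one further small transverse regular homotopy (push the strands apart near each multiple point) we may assume the diagram is \emph{simple}: all crossings are double points, occurring at distinct $x$‑coordinates. Reading the crossings from left to right then records a reduced word $w$ for the longest element $w_0\in S_n$.

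For the target, fix any collection of $n$ straight lines in general position (distinct slopes, no three concurrent); after choosing a sufficiently large box and relabeling, this is also a simple wiring diagram, recording some reduced word $w'$ for $w_0$. It therefore suffices to produce a transverse regular homotopy of simple wiring diagrams effecting the combinatorial change from $w$ to $w'$ (a final homotopy within the connected space of wiring diagrams of type $w'$ then straightens the strands literally). By the standard fact that any two reduced words for $w_0$ are connected by commutation moves $s_is_j\leftrightarrow s_js_i$ $(|i-j|\ge 2)$ and braid moves $s_is_{i+1}s_i\leftrightarrow s_{i+1}s_is_{i+1}$, it is enough to realize each such move geometrically. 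A commutation move slides two crossings involving disjoint pairs of strands past one another in the $x$‑direction; since the pairs are disjoint nothing degenerates and all intersections stay transverse. A braid move is the triangle flip: the three strands involved bound a small triangle carrying the three relevant crossings, and we contract this triangle to a single point and re‑expand it so that the three crossings occur in the opposite order. At the instant of contraction the three strands pass through one common point, but with pairwise distinct slopes there, so they still meet pairwise transversally; performing the move inside a small disk meeting no other strand, every intermediate configuration is an embedded family of graphs with only transverse pairwise intersections. Concatenating the finitely many moves with the initial Goodman homotopy yields the desired transverse regular homotopy; it is supported in $B$, the straight chord extensions are never moved, and at the final time each strand is a genuine straight line in $\rptwo$.

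The points that need care are local‑model questions. First, one must check that, after a preliminary transverse regular homotopy, the three (resp.\ two) crossings to be modified can always be placed in a disk disjoint from all other strands, with the three arcs graphical there; this is routine bookkeeping with the wiring diagram. Second, one must confirm that, inside $B$, each strand remains at every time an embedded arc homotopic rel endpoints to its fixed straight chord, so that under the compactification $\R^2\subset\rptwo$ it closes up to an embedded circle isotopic to $\rpone$ — this is immediate. The one genuinely delicate step, which I expect to be the main obstacle, is the explicit local model for the triangle flip: one must carry out the contraction of the triangle keeping the three arcs graphical and keeping their slopes pairwise distinct at the moment they become concurrent, so that no tangency is ever created and the configuration stays transverse throughout. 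Once this local model is in place the proof is a finite concatenation of explicit homotopies. (An alternative route, avoiding the Coxeter‑theoretic input, is to first isotope the wiring diagram, through wiring diagrams, to one in which every pairwise height difference $f_i-f_j$ is strictly monotone, and then linearly interpolate the strand functions to their straight chords: a convex combination of strictly decreasing functions is strictly decreasing, so each pair meets in exactly one transverse point at every time — but then the work shifts to proving that such a monotone wiring diagram is reachable.)
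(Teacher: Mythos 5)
Your proposal follows essentially the same route as the paper: Goodman's wiring-diagram normal form, a perturbation of multiple points into double points, the Matsumoto--Tits theorem connecting reduced words for the longest element by braid and commutation moves, and a geometric realization of the braid move by contracting the three crossings to a momentary triple point at which the strands have pairwise distinct slopes. The one step you flag as the main obstacle --- the explicit local model for the triangle flip --- is exactly what the paper supplies, via an explicit piecewise-linear homotopy (with corners rounded in small neighborhoods disjoint from the other strands) that collapses a canonically ordered block of double points to a single multiple point and re-expands it; the paper also ends at the pencil of $n$ concurrent lines rather than at a generic straight-line arrangement, which sidesteps your final appeal to connectedness of the space of wiring diagrams of a fixed combinatorial type.
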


\begin{proof}
	Observe that if a homotopy realizes a tangency that cannot be avoided by an arbitrarily small smooth perturbation, then for some period of time during the homotopy the two pseudolines will intersect in at least three points transversally (the extra intersections are algebraically cancelling pairs). In particular, any planar isotopy of an \rpl\ arrangement (viewing the configuration of pseudolines as a planar graph where intersections are vertices) certainly can be realized as a transverse regular homotopy. Thus, we can get to a wiring diagram format by a transverse regular homotopy by Goodman's result.
	
	Next, we show that there is a transverse regular homotopy between a straight line arrangement of $k$ lines intersecting at a point and $k$ lines which intersect generically in double points such that the double points are ordered in a wiring diagram from left to right as follows. If the lines are labeled from top to bottom from $1$ to $k$ and we label the intersection between line $i$ and line $j$ by $(i,j)$ for $i<j$ then the intersections should have the dictionary order going from left to right in the wiring diagram (see figure \ref{fig:doublepoints}). We will say such a double point configuration of $k$ lines is canonically ordered.
	
	\begin{figure}
		\centering
		\includegraphics[scale=.8]{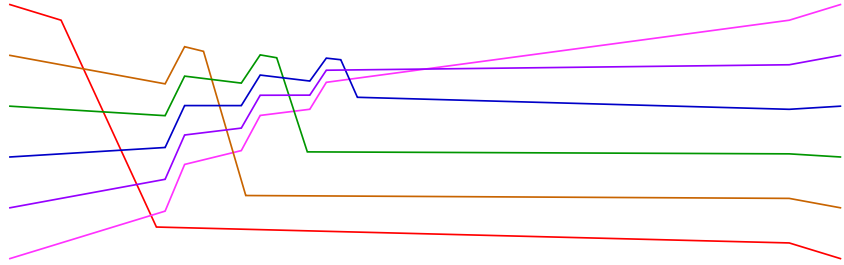}
		\caption{Canonically ordered double point configuration.}
		\label{fig:doublepoints}
	\end{figure}
	
	We homotope the lines one at a time, starting with the line with the highest endpoint on the left hand side. Choose a point on this line to the right of the multipoint to move so that the slope of the line to the left of this point becomes more steeply negative, and to the right of this line goes to zero except in a neighborhood of the right hand end point, as in figure \ref{fig:doublepointshomotopy}. Since all intersections with other lines only occur in the region where the line becomes more steeply negative than the slopes of any other line, this can be done transversally. By a planar isotopy, readjust the heights of the remaining $k-1$ lines to the right of their intersections with the first line to resymmetrize the heights in a wiring diagram neighborhood of the $(k-1)$-fold multipoint. In this smaller neighborhood we now have a standard intersection of multiplicity $k-1$. Thus by induction, we can achieve the desired transverse regular homotopy. Note that while this description uses piecewise linear curves, we can perform arbitrarily small roundings of corners, which we have ensured never intersect the other lines, to smooth the pseudolines without affecting the transversality property.

	\begin{figure}
		\centering
		\includegraphics[scale=.6]{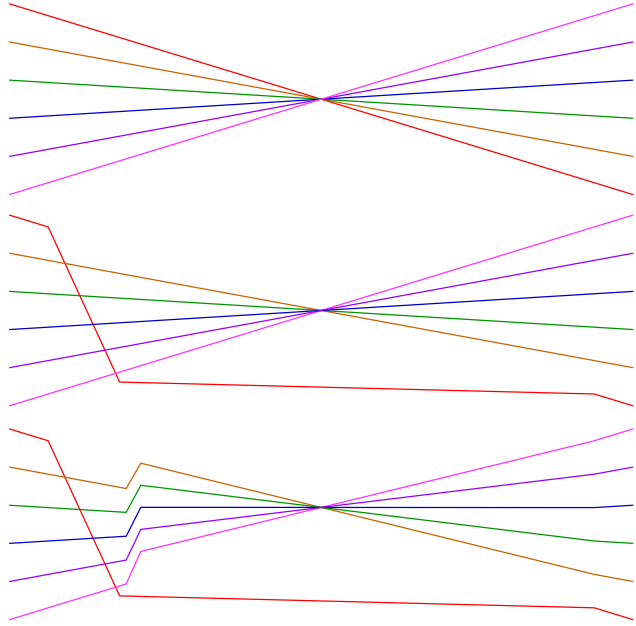}
		\caption{Inductive step for the regular transverse homotopy.}
		\label{fig:doublepointshomotopy}
	\end{figure}
	
	Now we can use this regular homotopy locally near every multi-intersection point of our original \rpl\ arrangement. This gives us a regular homotopy to an \rpl\ arrangement in wiring diagram form with only double point intersections. The double points of the full arrangement of $n$ lines may not come in the canonical order. However, by a classical theorem of Matsumoto and Tits (see \cite{bump} Theorem 25.2 or \cite{matsumoto}), we can reorder the double points to the canonical order by performing a sequence of \emph{braid moves}. To apply this theorem we reinterpret a wiring diagram with only double points as a reduced word representing the element of the symmetric group $S_n$ which takes $12\cdots n$ to $n\cdots 21$. The letters of the word are transpositions corresponding to the double point crossings. The reduced condition corresponds to the fact that any pair of wires crosses at most once because we are working with \rpl\ arrangements. Let $\tau_i$ be the transposition of the element in the $i^{th}$ and $(i+1)^{st}$ positions. Then the two braid relations are
	\begin{enumerate}
		\item $\tau_i\tau_j\tau_i=\tau_j\tau_i\tau_j$ when $|i-j|=1$; 
		\item $\tau_i\tau_j=\tau_j\tau_i$ when $|i-j|> 1$.
	\end{enumerate}
	The first move geometrically corresponds to a Reidemeister three move if interpreting the diagram as a knot projection. We can see this move in our context in figure \ref{fig:braid1}. This move can be achieved (after a planar isotopy) by collapsing the three double points to a single triple point by reversing in time the above construction. By rotating the same homotopy by $180^\circ$ (now going forward in time) we split the triple point into double points in a way that is planar isotopic to the right hand side of braid move (1) (see figure \ref{fig:braid1homotopy}). The second braid move corresponds simply to a planar isotopy which changes the relative horizontal position of crossings between disjoint pairs of strands (figure \ref{fig:braid2}).
	
	\begin{figure}
		\centering
		\includegraphics[scale=.8]{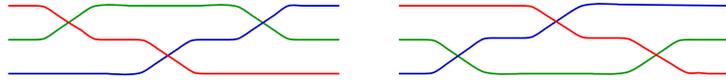}
		\caption{Braid move (1).}
		\label{fig:braid1}
	\end{figure}
	
	\begin{figure}
		\centering
		\includegraphics[scale=.8]{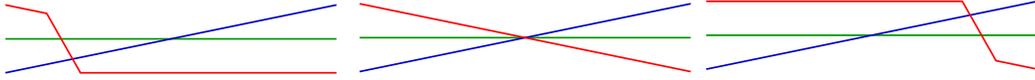}
		\caption{Transverse regular homotopy achieving braid move (1).}
		\label{fig:braid1homotopy}
	\end{figure}
	
	\begin{figure}
		\centering
		\includegraphics[scale=.8]{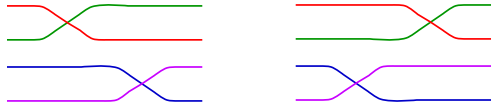}
		\caption{Braid move (2).}
		\label{fig:braid2}
	\end{figure}
	
	Therefore by Matsumoto's theorem, we have a transverse regular homotopy of our pseudoline arrangement to a wiring diagram arrangement with only double points occurring in the canonical order. Thus by reversing in time the transverse regular homotopy constructed above, we can transversally homotope this configuration to a straight line arrangement of $n$ lines intersecting at a single point of multiplicity $n$.  If we start with a wiring diagram, this transverse regular homotopy is compactly supported inside the wiring diagram box, and throughout the homotopy we have a wiring diagram form and the slopes of the wires near the boundary of the box agree with the extensions of these wires as in figure \ref{fig:extendedwires}. Thus the final configuration corresponds to a geometric straight line arrangement.	
\end{proof}

\subsection{Symplectifying}
	Next we determine when $S(\Gamma_i)$ is a symplectic submanifold of $\cptwo$ and prove Theorem \ref{T:real-symplectic}. 
		By a planar isotopy, we will assume $\Gamma_1,\cdots, \Gamma_n$ are arranged via a wiring diagram. Using the construction of proposition \ref{prop:transverse}, we will assume the transverse regular homotopy is supported in a compact disk, and has wiring diagram form throughout the homotopy. Then for each $i$, $S(\Gamma_i)$ coincides with a complex projective line outside of a single coordinate chart $\C^2\subset \cptwo$. Thus it suffices to check whether $S(\Gamma_i)$ is symplectic in this coordinate chart. Note that the intersection of this chart with $\rptwo$ is $\R^2$.
	
	We will use coordinates $(q_1,q_2)$ on $\R^2$ and $(p_1,p_2)$ on $i\R^2$ where the symplectic form is $\omega=dq_1\wedge dp_1+dq_2\wedge dp_2$. Assume that $q_1$ is the horizontal axis of the wiring diagram so that the pseudoline $\Gamma^r$ can be parametrized in the $(q_1,q_2)$ plane by $(t,q(r,t))$  where $q(r,t):=q_{|r|}(t)$ for $r\in[-1,1]$. Then we can parametrize $\A(\Gamma)$ by
	$$\psi(r,t)=\left(t,q(r,t), \frac{r}{\sqrt{1+\left(\frac{dq}{dt}\right)^2}},\frac{r\frac{dq}{dt}}{\sqrt{1+\left(\frac{dq}{dt}\right)^2}} \right).$$
	To determine whether $S(\Gamma)$ is symplectic we want to know that $\omega(\frac{d\psi}{dt}, \frac{d\psi}{dr})>0$ for all $(r,t)$. We compute this value to be:
	$$\left(1+\left(\frac{dq}{dt}\right)^2\right)^{-3/2}\left[ \left(1+\left(\frac{dq}{dt}\right)^2\right)^2+r\left(-\frac{dq}{dr}\frac{d^2q}{dt^2}+\frac{d^2q}{dtdr}\left(\frac{dq}{dt}\right)^2\left(\frac{dq}{dt}-1\right) \right) \right].$$
	
	Since $(1+\left(\frac{dq}{dt}\right)^2)>1$ it suffices to have 
	$$r\left(-\frac{dq}{dr}\frac{d^2q}{dt^2}+\frac{d^2q}{dtdr}\left(\frac{dq}{dt}\right)^2\left(\frac{dq}{dt}-1\right) \right)$$
	close to $0$. We can assume we started with an isotoped version of the wiring diagram which was stretched arbitrarily long by $1/\varepsilon$ in the $t$ direction. This has the effect of a change of variables replacing $t$ by $\varepsilon t$. Performing this change of variables gives 
\begin{multline*}
\omega(\frac{d\psi}{dt},\frac{d\psi}{dr})=
\left(1+\left(\varepsilon\frac{dq}{dt}\right)^2\right)^{-3/2} \times \\
\left[ \left(1+\left(\varepsilon\frac{dq}{dt}\right)^2\right)^2+r\left(-\varepsilon^2\frac{dq}{dr}\frac{d^2q}{dt^2}+\varepsilon\frac{d^2q}{dtdr}\left(\varepsilon\frac{dq}{dt}\right)^2\left(\varepsilon\frac{dq}{dt}-1\right) \right) \right].
\end{multline*}
	By making $\varepsilon$ sufficiently small, this value will be positive. Determine a sufficiently small value of $\varepsilon$. Replace the entire homotopy including the starting configuration with a new stretched version and adjust the slopes of the extended lines at infinity accordingly. The smooth complexification process agrees with the algebraic complexification wherever the lines are straight so the symplectic condition holds outside of a compact subset of this chart. Thus the $1/\varepsilon$ stretched version of $S(\Gamma^r)$ is symplectic everywhere. For a collection $\Gamma_1^r,\cdots, \Gamma_n^r$ determine an appropriate $\varepsilon_i$ for each and use $\min_i{\varepsilon_i}$ to stretch the entire configuration. This concludes the proof of theorem \ref{T:real-symplectic}.

\begin{remark}
The spheres in the configuration constructed in Theorem \ref{T:real-symplectic} are {\em flexible curves} of degree $1$ in the terminology of \cite{viro:real}. In addition to the conjugation-invariance proved in that theorem, flexibility entails that the restriction to each $\Gamma_i$ of the tangent field to each surface be isotopic (through conjugation-invariant plane fields) to the restriction of the tangent field of a complex line. It is readily verified that this condition holds in our construction.
\end{remark}

\section{Non-fillable contact manifolds}
\label{S:non-fillable}

Symplectic fillings are symplectic manifolds with positive contact type boundary, also known as convex boundary. Contact type is useful because then a contact structure determines the germ of the symplectic structure along the boundary using considerably less data than the symplectic form itself. Convex positive contact boundaries can be glued to \emph{concave} negative contact boundaries to get a global symplectic structure on the glued manifold. Symplectic fillings tend to have more topological restrictions than concave caps, their negative contact boundary counterparts. Understanding which smooth manifolds can be symplectic fillings of a given contact boundary is both an interesting classification problem in its own right, and is useful for producing symplectic cut and paste operations to construct closed symplectic manifolds with interesting properties (see \cite{park, karakurtstarkston} for a small sample of examples). Most of the classifications of symplectic fillings which have been achieved, eventually come down to understanding a classification of symplectic curves in $\cptwo$ with prescribed singularities (see \cite{mcduff:rationalruled, lisca, ohtaono:symplecticfillings, ohtaono:simplesingularities, starkston:fillings, gollalisca:stein}.  Symplectic \cpl\ arrangements are an example of such curves (the totally reducible case) which appear the second author's classification of fillings of a large class of Seifert fibered spaces over $S^2$ \cite{starkston:fillings}. Here we give a different symplectic filling result which is more immediately related to symplectic realizability of a line arrangement.

If there is a symplectic realization of a line arrangement in $\cptwo$, then it will have a concave symplectic neighborhood, and the complement of that neighborhood will be a strong symplectic filling of the dividing contact hypersurface. Even without an actual symplectic realization of the line arrangement in $\cptwo$, we can construct a manifold $N_\A$ which deformation retracts onto a configuration of intersecting spheres, such that those spheres have intersections and self-intersections specified by a given combinatorial line arrangement $\A$. We show that this manifold with boundary can be endowed with a concave symplectic structure. We will then study this concavely induced contact manifold $(Y_\A,\xi_\A)$.

\begin{proposition}
	Let $\A$ be a combinatorial line arrangement. Then there exists a symplectic manifold $N_\A$ with concave boundary which deformation retracts onto a collection of transversally intersecting spheres, such that the spheres intersect according to $\A$ and each sphere has self-intersection number $1$.
\end{proposition}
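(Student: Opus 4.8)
The plan is to realize $N_\A$ as a symplectic plumbing, after first using a resolution to reduce to the normal-crossings case, where such plumbings and the contact structures on their boundaries are well understood. Since $\A$ typically has points of multiplicity $\geq 3$, whose local model is $m$ concurrent complex lines rather than a transverse pair, I would first pass to a normal-crossings model by ``blowing up.'' Formally, let $\A'$ be the configuration obtained from $\A$ by introducing, for every point $P\in\A_\P$, a new line $E_P$ that meets exactly the lines $L_i$ with $L_i\cdot P=1$, each in a single new double point, while the proper transforms $\widetilde L_1,\dots,\widetilde L_\ell$ no longer meet one another; record target self-intersections $-1$ for each $E_P$ and $1-n_i$ for $\widetilde L_i$, where $n_i=\sum_P L_i\cdot P$ is the number of points of $\A$ on $L_i$. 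Then $\A'$ is the incidence data of a bipartite normal-crossings configuration, encoded by a graph $G$ whose vertices are spheres of prescribed self-intersection, with a (transverse, positive, double-point) edge $\widetilde L_i$--$E_P$ exactly when $L_i\cdot P=1$.

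Next I would realize $G$ by an honest symplectic plumbing $(N',\omega')$ whose boundary is \emph{concave}. Symplectic disk bundles of the prescribed Euler numbers over $S^2$ can be plumbed symplectically according to any graph (a standard construction; compare the plumbings used in \cite{starkston:fillings}), and the area parameters can be chosen so that the resulting symplectic form has concave rather than convex boundary. The governing principle (in the spirit of the work of Gay--Stipsicz and Li--Mak on symplectic caps and divisorial capping) is that a plumbing along $G$ admits such a concave structure as soon as one can choose positive ``areas'' $a_v$ at the vertices with $a_v e_v+\sum_{w\sim v}a_w>0$ for every vertex $v$, i.e.\ so that $\sum_v a_v\Sigma'_v$ is numerically positive. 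For $G$ this is a short computation: set $a_{E_P}=\varepsilon$ and $a_{\widetilde L_i}=t$; the inequality at $E_P$ is $m(P)t>\varepsilon$ and the inequality at $\widetilde L_i$ is $(1-n_i)t+n_i\varepsilon>0$, so it suffices to take $\varepsilon/2<t<\frac{n_{\max}}{n_{\max}-1}\varepsilon$, a nonempty range since $\frac{n_{\max}}{n_{\max}-1}>1>\frac12$. Conceptually this just records that the combinatorics of a line arrangement, even one with no geometric realization, still carries the numerical data of an ample divisor on an iterated blow-up of $\cptwo$.

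Finally I would recover $N_\A$ from $N'$ by symplectic blow-down. The spheres $E_P$ are pairwise disjoint in $N'$, and each is a symplectic $(-1)$-sphere meeting the $\widetilde L_i$ with $L_i\cdot P=1$ transversally and positively in distinct points, so they can be blown down one at a time inside $N'$ \cite{mcduff:rationalruled}; each blow-down is supported in the interior, so the boundary contact manifold and its concavity are unchanged. Blowing down $E_P$ merges the spheres through it into spheres meeting at a single transverse positive point, so after blowing down all the $E_P$ the images $\Sigma_1,\dots,\Sigma_\ell$ of the $\widetilde L_i$ are symplectic spheres intersecting exactly according to $\A$, each of self-intersection $(1-n_i)+n_i=1$ (each of the $n_i$ blow-downs it participates in raises its square by one). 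Moreover $N_\A$ deformation retracts onto $\bigcup_i\Sigma_i$: a plumbing retracts onto the union of its zero sections, and blowing down an $E_P$ simply deletes it from that retract. This $(N_\A,\omega)$ is the desired manifold.

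The main obstacle is the middle step: producing a symplectic form with \emph{concave}, not convex, boundary. The intuition is that a regular neighborhood of a configuration of positively self-intersecting symplectic spheres behaves like the complement of a ball in a rational surface --- it is a symplectic \emph{cap}, not a Stein filling --- but making this rigorous requires the plumbing-to-cap dictionary together with the positivity verification above, and the bookkeeping with the area parameters is where one has to be careful; as indicated, it always works because $\A$, realizable or not, encodes positive numerical data, and the $(-1)$-curves introduced by the resolution are exactly what one blows down at the end.
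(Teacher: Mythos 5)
Your proof is correct and follows essentially the same route as the paper: resolve the configuration to a normal-crossings plumbing, verify the Li--Mak positive G-S criterion to obtain a concave symplectic structure, and then symplectically blow down the exceptional $(-1)$-spheres. The only (immaterial) difference is that you introduce an exceptional sphere at \emph{every} point of $\A$, making the plumbing graph bipartite, whereas the paper blows up only at points of multiplicity at least $3$ and keeps double points as direct plumbings between the line spheres; both positivity verifications go through (the constant weight vector already works in either setup).
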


\begin{proof}
	A plumbing of disk bundles over surfaces is a way to build a manifold with boundary which deformation retracts onto a collection of surfaces which intersect transversally in double points. While we could build $N_\A$ in a similar way using a local model for each intersection multiplicity, we instead use blow-ups to directly appeal to plumbing constructions. This allows us to quickly adapt symplectic results about plumbings to our situation.
	
	We construct a plumbing $P_\A$ which will contain a collection of $-1$ (exceptional) spheres in correspondence with the intersection points of $\A$ of multiplicity at least $3$. The key property of $P_\A$ is that the image of the core spheres of the plumbing, after blowing down this collection of exceptional spheres, will be $+1$ spheres intersecting each other as specified by $\A$. The existence of $P_\A$ follows from the elementary fact that if $n$ surfaces intersect transversally at a point, then after blowing up, the $n$ surfaces become disjoint from each other, and they each intersect the exceptional sphere transversally at different points. Thus we build $P_\A$ by trading each multiplicity $n$ point in $\A$ for an exceptional sphere which intersects each of the $n$ spheres in distinct double points. We decrease the self-intersection number from $+1$ by one for each exceptional sphere it intersects.
	
	Let $k$ be the number of lines in $\A$ and let $N$ be the number of intersection points of multiplicity at least $3$ in $\A$. Then $P_\A$ is a plumbing of $k+N$ disk bundles over spheres ($k$ for the proper transforms of the lines and $N$ for the exceptional spheres).

	Explicit constructions of symplectic structures on plumbings were first studied by Gay and Stipsicz in \cite{gaystipsicz:convexnbhds} to produce symplectic negative definite plumbings with convex boundary. This construction was extended by Li and Mak to the concave case in \cite{limak:concavenbhd}, where it is shown that a plumbing supports a symplectic structure with concave boundary if the plumbing graph satisfies the \emph{positive G-S criterion}. This means that if $Q$ is the $(k+N)\times (k+N)$ intersection matrix for the plumbing, there exists $z\in (0,\infty)^{k+N}$ such that $Qz$ is a vector with all positive components.
	
	Ordering the vertices so that the first $k$ correspond to proper transforms of lines $L_1,\cdots, L_k$ and the last $N$ correspond to exceptional spheres, we see that $Q$ takes the following block form:
	$$Q=\left[  \begin{array}{cc} B & A \\ A^T & -I_N \end{array} \right].$$
	Here $A$ is the $k\times N$ incidence matrix for the $k$ lines and $N$ multi-intersection points of $\A$ (the double point incidence columns are left off), and $B$ is the following $k\times k$ symmetric matrix.
	$$B= \left[ \begin{array}{cccc} 
	1-n_1 & 1-\delta_{1,2} & \cdots & 1-\delta_{1,k} \\
	1-\delta_{1,2} & 1-n_2 & \cdots & 1-\delta_{2,k} \\
	\vdots & \vdots & \ddots & \vdots \\
	1-\delta_{1,k} & 1-\delta_{2,k} & \cdots & 1-n_k\\
	 \end{array}  \right]$$
	 where $n_j$ for $j=1,\cdots, k$ denotes the number of multi-intersection points on $L_j$ and 
	 $$\delta_{i,j}=\begin{cases} 0 & \text{if } L_i\cap L_j \text{ is a double point}\\ 1 & \text{if } L_i\cap L_j \text{ is a multi-intersection point} \end{cases}.$$
	
	We show we can take $z\in (0,\infty)^{k+N}$ to be the vector whose coordinates are all $1$, to verify the positive G-S condition. Observe that $n_j$, the number of multi-intersection points on $L_j$ can be calculated as
	$$n_j = e_j^T A \left[ \begin{array}{c} 1\\ \vdots \\ 1 \end{array} \right]$$
	where $e_j^T$ is the $1\times k$ matrix with a $1$ in the $j^{th}$ column and $0$ elsewhere and the last vector is a $N\times 1$ vector of all $1$'s. Similarly, we can write $m_p$, the number of lines through the $p^{th}$ multi-point as
	$$m_p = \left[ 1 \cdots 1 \right] A e_p.$$
	
	Then the first $k$ coordinates of $Qz$ are
	$$(Qz)_j= 1-n_j+\sum_{i\neq j}(1-\delta_{i,j})+n_j \geq 1$$
	and the last $N$ coordinates are
	$$(Qz)_{k+p}= m_p-1\geq 2$$
	(since every multi-intersection point lies on at least $3$ lines).
	
	Therefore by \cite{limak:concavenbhd} the plumbing $P_\A$ supports a symplectic structure where the core spheres of the plumbing are symplectic, they intersect $\omega$-orthogonally, and there is an inward pointing Liouville vector field demonstrating that the boundary is concave.
	
	To build $N_\A$, blow-down the exceptional $(-1)$ spheres in $P_\A$. To do this symplectically in a controlled manner, choose an almost complex structure $J$ tamed by $\omega$ for which the core symplectic spheres (particularly the $-1$ spheres) are $J$-holomorphic and then blow-down those $-1$ spheres \cite{mcduff:rationalruled}.
\end{proof}

The symplectic structure on the plumbing from \cite{gaystipsicz:convexnbhds, limak:concavenbhd} is obtained by gluing together symplectic structures on pieces. For each disk bundle to be plumbed in $n$ places, Gay and Stipsicz associate a trivial disk bundle over the surface (which in our case is always a sphere) with $n$ holes: $(S^2 \setminus (D_1\sqcup \cdots \sqcup D_n))\times D^2$. The symplectic structure is $\beta_v+rdr\wedge d\theta$ with Liouville vector field $V_v=W_v+\left(\frac{1}{2}r-\frac{z_v}{r}\right)\partial_r$. Here $\beta_v$ is a symplectic form on $\Sigma_v = (S^2\setminus (D_1 \sqcup \cdots \sqcup D_n))$ with a Liouville vector field $W_v$ (which one can choose to point inward or outward in a standard way near each boundary component). For each plumbing to be performed, Gay and Stipsicz construct a toric model for the transversally intersecting disks whose moment map image is shown in figure \ref{fig:elbow}. Li and Mak observe that if the plumbing satisfies the positive G-S criterion, then the vector $z$ specifies a translation of this toric base in $\R^2$ into the third quadrant so that the standard Liouville vector field in toric coordinates points inwards on the boundary. Moreover these toric pieces fit together with the disk bundle pieces so that the symplectic and Liouville structures align. 

Note that because the Liouville vector field is defined and non-zero away from the core symplectic spheres, by Moser's theorem, this concave structure exists in arbitrarily small neighborhoods of any collection of symplectic spheres intersecting according to given the plumbing graph. By blowing up and then blowing down on the interior of a neighborhood of a line arrangement, one sees that such concave neighborhoods exist in any small neighborhood of a symplectic realization of a line arrangement. Because of this property, we will call the concavely induced contact manifold on the boundary the \emph{canonical contact manifold for the line arrangement} $(Y_\A,\xi_\A)$.

\begin{figure}
	\centering
	\includegraphics[scale=.5]{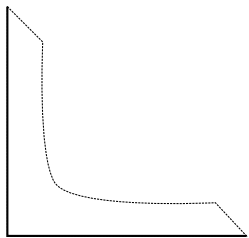}
	\caption{Toric base for a neighborhood of a plumbing region.}
	\label{fig:elbow}
\end{figure}

The contact structure induced by this plumbing is a particularly natural contact structure on a graph manifold and agrees with the canonical contact structures on lens spaces and Seifert fibered spaces. Observe that the Reeb vector field on the contact boundary is a multiple of $\partial_{\theta}$ away from the plumbing regions so the Reeb flow is just given by the circle fibration. Near the plumbing regions given by figure \ref{fig:elbow}, the boundary piece is a $T^2\times I$. The contact planes here are transverse to each $T^2\times\{s\}$ and the characteristic foliations on these tori are linear with slopes varying monotonically as $s$ varies in $I$. Such tori in a contact manifold are called \emph{pre-Lagrangian}. A result of Colin \cite{colin:tightcontact} says that if $(Y,\xi)$ is a contact manifold containing an incompressible pre-Lagrangian torus $T$ then if the induced contact structure on $Y\setminus T$ is universally tight, then the glued up contact manifold $(Y,\xi)$ is universally tight. Using this result we can prove the following.

\begin{proposition}
	The contact manifold $(Y_\A,\xi_\A)$ coming from any line arrangement $\mathcal{A}$ is (universally) tight.
\end{proposition}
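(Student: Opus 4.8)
The plan is to decompose $Y_\A$ along pre-Lagrangian tori into pieces whose induced contact structures are recognizably universally tight, and then to reassemble $Y_\A$ one torus at a time using Colin's theorem.

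First I would record the torus decomposition coming from the plumbing. The plumbing $P_\A$ is assembled from a disk-bundle piece $\Sigma_v\times D^2$ over a sphere for each vertex $v$ of the plumbing graph, where $\Sigma_v$ is $S^2$ with one open disk removed for each plumbing at $v$, glued along the toric elbow pieces of Figure~\ref{fig:elbow}, one per edge. Passing to the boundary, $Y_\A=\partial P_\A$ is correspondingly the union, along tori, of circle-bundle pieces $Y_v$ (the part of $Y_\A$ over $\Sigma_v$), on which the Reeb field is a positive multiple of $\partial_\theta$ and the contact planes are transverse to the $S^1$-fibers, together with a $T^2\times I$ piece $Y_e$ for each edge, on which, as already noted, the contact structure is the rotative model with linear characteristic foliations of monotonically varying slope. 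The separating tori are pre-Lagrangian. Since across each elbow the plumbing interchanges the fiber and base directions of the two adjacent disk bundles, these tori are incompressible in $Y_\A$, except possibly for those bounding a piece $Y_v$ with $\Sigma_v$ a disk or an annulus --- equivalently $v$ a proper transform meeting at most two multi-points, the exceptional-sphere vertices always having valence $\ge 3$ because every multi-point has multiplicity at least $3$. I would absorb each such low-valence $Y_v$, which is a solid torus resp.\ a $T^2\times I$, into its neighbouring elbow piece(s) before cutting, so as to be left with a decomposition of $Y_\A$ along incompressible pre-Lagrangian tori $T_1,\dots,T_m$ whose complementary pieces are circle bundles over planar surfaces, rotative $T^2\times I$'s, and Honda-standard tight solid tori.

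Next I would check that each piece, with its induced contact structure, is universally tight. For a rotative $T^2\times I$ piece the total variation of the slope of the characteristic foliation is less than $\pi$ by construction from the toric model, so it is a minimally twisting layer, universally tight by the standard Giroux--Honda analysis. For a circle-bundle piece over a planar $\Sigma_v$, filling in the removed disks extends it to the boundary of the corresponding closed disk bundle over $S^2$ --- a lens space, or $S^1\times S^2$, or $S^3$ --- with its canonical contact structure; for non-positive Euler number this disk bundle carries a Stein structure, so the boundary is Stein-fillable, and in the remaining cases it is $S^3$ or $S^1\times S^2$ with the standard tight structure. All of these canonical contact structures on lens spaces, on $S^1\times S^2$ and on $S^3$ are universally tight, and since $Y_v$ sits inside one of them as an open subset, and tightness and universal tightness are inherited by sub-contact-manifolds (and by covers), $Y_v$ is universally tight; similarly for the tight solid-torus pieces. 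A disjoint union of universally tight manifolds is universally tight.

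Finally I would reassemble $Y_\A$, restoring $T_m, T_{m-1}, \dots, T_1$ in turn. Incompressibility of the $T_j$ is preserved by cutting along the others, so at each stage the partially reassembled manifold contains the next torus as an incompressible pre-Lagrangian torus whose complement is, by the previous stage, universally tight; Colin's theorem~\cite{colin:tightcontact} then promotes universal tightness across that torus. After $T_1$ is restored, $(Y_\A,\xi_\A)$ is universally tight, hence in particular tight. The step I expect to be the main obstacle is not the gluing but the careful use of the Gay--Stipsicz/Li--Mak local models: extracting from the explicit data (the form $\beta_v+r\,dr\wedge d\theta$ on the bundle pieces and the toric elbow of Figure~\ref{fig:elbow}) that each elbow contributes slope rotation less than $\pi$ and that the bundle pieces genuinely cap off to the canonical contact structure on the relevant lens space, together with the bookkeeping needed to arrange the low-valence vertices so that every torus used in Colin's theorem is incompressible.
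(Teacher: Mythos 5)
Your overall architecture (cut $Y_\A$ along the pre-Lagrangian tori of the plumbing, verify each piece is universally tight, reassemble with Colin's theorem) is the same as the paper's, but the step where you certify the circle-bundle pieces has a genuine gap. You argue that $Y_v \cong S^1\times\Sigma_v$ is universally tight because it embeds as an open subset of a lens space, $S^1\times S^2$, or $S^3$ with its canonical contact structure, invoking the principle that ``tightness and universal tightness are inherited by sub-contact-manifolds (and by covers).'' Tightness does pass to open subsets, but it does \emph{not} pass to covers (virtually overtwisted contact structures on lens spaces are tight with overtwisted finite covers), and universal tightness passes to an open subset $U\subset M$ only when one can compare the universal cover of $U$ with the preimage of $U$ in the universal cover of $M$ --- which requires $\pi_1(U)\to\pi_1(M)$ to be injective. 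Here $\pi_1(S^1\times\Sigma_v)$ is infinite while $\pi_1$ of the ambient lens space is finite, so the map is maximally non-injective and the deduction fails. (You would also need to check that the Gay--Stipsicz boundary structure on the capped-off disk bundle really is the canonical one; Stein fillability alone does not give universal tightness, since Stein fillable virtually overtwisted structures exist.) The paper avoids all of this by a direct computation: the induced contact form on $S^1\times\Sigma_v$ is $\alpha_v=\eta_v+\tfrac12 r^2d\theta-z_vd\theta$, and $\mathcal{L}_{\partial_\theta}\alpha_v=0$, so the contact structure is $S^1$-invariant, and $S^1$-invariant contact structures on $\Sigma\times S^1$ are known to be universally tight. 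If you want to keep your embedding strategy you must either restore $\pi_1$-injectivity or replace it with this invariance argument.

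A second, smaller issue is your treatment of low-valence vertices. First, the valence of a line vertex in $P_\A$ counts \emph{all} intersection points on that line, not just multi-points: two lines meeting at a double point are plumbed directly to each other (this is what the off-diagonal entries $1-\delta_{i,j}$ of $B$ record). Since in a non-pencil arrangement every line carries at least two intersection points, every vertex already has valence $\ge 2$, no $\Sigma_v$ is a disk, and your absorption step is unnecessary there. Where disks genuinely occur is the pencil, and there your plan is internally inconsistent: after absorbing a solid-torus piece into its adjacent elbow, the torus separating it from the rest still bounds a solid torus, hence is compressible in $Y_\A$, so Colin's theorem cannot be applied across it and no choice of bookkeeping fixes this. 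The paper handles the pencil by an entirely separate argument --- the arrangement is realizable, its complement is a $1$-handlebody giving a symplectic filling, and the resulting structure is the unique (universally) tight contact structure on $\conn_n S^1\times S^2$ --- and only then runs the Colin argument for non-pencil arrangements, where every cutting torus is incompressible by the amalgamated-product/HNN argument. You should add the pencil case as a separate branch rather than trying to absorb it into the torus decomposition.
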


\begin{proof}
	If the line arrangement is a pencil (meaning every line intersects at a single point), then it can be realized in $\cptwo$ and has a concave neighborhood inducing the canonical contact structure on the boundary. The complement is a symplectic filling of this contact boundary with the diffeomorphism type of a 1-handlebody filling $\conn_n S^1\times S^2$. Therefore the canonical contact structure is fillable and therefore tight and is the unique tight contact structure on $\conn_n S^1 \times S^2$.
	
	For any other line arrangement, the corresponding plumbing $P_\A$ has the property that every disk bundle is plumbed in at least two places. In this case, the pre-Lagrangian torus associated to each plumbing is incompressible in the 3-manifold. This is because it is incompressible in the $S^1\times \Sigma_v$ piece on the boundary of the incident disk bundles and incompressibility is preserved under gluing by the Seifert van Kampen theorem and the fact that a group injects into its amalgamated product with another group or any HNN extension. 
	
	The contact structure on a $S^1\times \Sigma_v$ piece on the boundary of a trivial disk bundle is induced by the symplectic structure and Liouville vector field on $D^2\times \Sigma_v$. The contact form is
	$$\alpha_v = \eta_v+\frac{1}{2}r^2d\theta-z_vd\theta$$
	where $\eta_v$ is a $1$-form on $\Sigma_v$. The Lie derivative in the $S^1$ fiber direction of this form is
	$$\mathcal{L}_{\partial_\theta}(\eta_v+\frac{1}{2}r^2d\theta-z_vd\theta)=\iota_{\partial_\theta}dr\wedge d\theta+d(\frac{1}{2}r^2)=-dr+dr=0.$$
	Therefore the contact structure is $S^1$ invariant on $\Sigma\times S^1$ and such contact structures are universally tight. 
	
	By repeatedly using Colin's theorem from \cite{colin:tightcontact} for each plumbing, we conclude that the glued up contact manifold is universally tight.
\end{proof}

\begin{remark}
	Note that this argument applies to plumbings more general than those corresponding to line arrangements. For any plumbing satisfying the positive G-S criterion where each disk bundle is plumbed in at least two places, there is a concavely induced universally tight contact structure on the boundary. When there are disk bundles plumbed in only one place, the torus associated to that plumbing is no longer incompressible and in certain cases (but not generically), the resulting contact structure is overtwisted (for example on the $S^3$ boundary of a plumbing of two spheres of self-intersection numbers $1$ and $2$).
\end{remark}

We now prove Theorem~\ref{T:non-filling} of the introduction, which we restate for the reader's convenience.
\begin{T:non-filling}
	Suppose a combinatorial line arrangement $\A$ is not symplectically realizable in $\cptwo$. Then the canonical contact manifold $(Y_\A,\xi_\A)$ is not strongly symplectically fillable.
\end{T:non-filling}

Of course, any arrangement $\A$ which is not topologically realizable in $\cptwo$ is not symplectically realizable, so each of the arrangements whose topological realizability we can obstruct gives rise to a non-strongly-fillable, universally tight contact manifold.
\begin{proof}
	Suppose $(Y_\A,\xi_\A)$ had a strong symplectic filling $(W,\omega)$. Then after rescaling and adding a collar which is a subset of the symplectization of $(Y_\A,\xi_\A)$ to $(W,\omega)$ so that the boundary contact forms match up, we can glue (the slightly modified) $(W,\omega)$ to the concave plumbing $P_\A$ to obtain a closed symplectic manifold $(X,\omega)$. The core spheres of the plumbing $P_\A$ are symplectic and $\omega$-orthogonal so there is an almost complex structure $J$ for which all of these spheres are $J$-holomorphic. Blow-down the $J$-holomorphic $-1$-spheres of the plumbing. Then the other spheres of the plumbing descend to $J$-holomorphic spheres of square $+1$ which are the core of the concave neighborhood $N_\A$. By a theorem of McDuff \cite{mcduff:rationalruled}, $(X,\omega)$ is symplectomorphic to a blow-up of $\cptwo$ and one of the $+1$-spheres is identified with the complex projective line under this symplectomorphism. The other symplectic $+1$ spheres are $J'$-holomorphic under the almost complex structure $J'$ on the blow-up of $\cptwo$ obtained by pushing forward $J$ on $X$ by the symplectomorphism. Their homology classes in $\cptwo \conn M\cptwobar$ have the form $a_0 \ell +\sum_{i=1}^Ma_ie_i$ where $a_0=1$ because each of these spheres algebraically intersects the sphere identified with $\cpone$ exactly once. The other $a_i$ are necessarily zero because $1=[L_j]^2 = 1-\sum a_i^2$. Therefore, the homological intersection of any line with any exceptional sphere is zero. All of the lines are $J'$-holomorphic. If the number of blow-ups, $M$, is greater than zero, we can find a $J'$-holomorphic exceptional sphere to blow-down, and it is necessarily disjoint from the $J'$-holomorphic spheres in $N_\A$ because its homological intersection with them is zero. Therefore we can blow-down exceptional spheres disjoint from $N_\A$ until we reach $\cptwo$. This gives a symplectic realization of the line arrangement $\A$ in $\cptwo$ contradicting the assumption.
\end{proof}

\begin{remark}
	In many cases, one can show that the contact manifolds arising as concave boundaries of plumbings of spheres are non-fillable by a more direct application of McDuff's theorem. For example, if there are two positive spheres in the plumbing which algebraically span a rank $2$ subspace of the positive summand of second homology, then $b_2^+$ any manifold containing this plumbing will be at least $2$. Since McDuff's theorem implies that any closed symplectic manifold containing a positive symplectic sphere has $b_2^+=1$, this means that the concave plumbing cannot be filled. The line arrangement case is a borderline case where this condition fails. The concave cap $N_\A$ has many positive symplectic spheres, but they algebraically only have rank one in second homology, so the fillability question is more subtle. Indeed, it is equivalent to the much more subtle question of symplectic realizability of the line arrangement.
\end{remark}

\bibliography{fano}
\bibliographystyle{amsplain}

\end{document}